\newcommand{\argmin}{\arg\!\min}
\newcommand{\argmax}{\arg\!\max}
\title[Oscillatory associative-memory network]
{Enhanced error-free retrieval in Kuramoto-type associative-memory networks via two-memory configuration} 
\author[Z. Li]{Zhuchun Li}
\address[Zhuchun Li]{\newline School of Mathematics and Institute for Advanced Study in Mathematics \newline Harbin Institute
	of Technology, Harbin 150001, China}
\email{lizhuchun@hit.edu.cn}
\author[X. Zhao]{Xiaoxue Zhao}
\address[Xiaoxue Zhao]{\newline School of Mathematics \newline Harbin Institute of Technology, Harbin 150001, China\\ \newline Department of Data Science \newline City University of Hong Kong, Kowloon 999077, Hong Kong Special Administrative Region}
\email{zhaoxiaoxue@hit.edu.cn, xiaoxue.zhao@cityu.edu.hk}
\author[X. Zhou]{Xiang Zhou}
\address[Xiang Zhou]{\newline Department of Mathematics \newline City University of Hong Kong, Kowloon 999077, Hong Kong Special Administrative Region}
\email{xizhou@cityu.edu.hk}
\newtheorem{theorem}{Theorem}[section]
\newtheorem{lemma}{Lemma}[section]
\newtheorem{proposition}{Proposition}[section]
\newtheorem{remark}{Remark}[section]
\newtheorem{example}{Example}[section]
\def\charf {\mbox{{\text 1}\kern-.30em {\text l}}}
\begin{document}
	
	\date{\today}
	
	\subjclass{ 34C15, 92C42} \keywords{Associative-memory, binary pattern retrieval, multi-stability, error-free retrieval, basin}
	
	
	
	\begin{abstract}
		We study the associative-memory network of Kuramoto-type oscillators that stores a set of memorized patterns (memories). In [\textit{Phys. Rev. Lett.}, 92 (2004), 108101], Nishikawa, Lai and Hoppensteadt showed that the capacity of this system for pattern retrieval with small errors can be made as high as that of the Hopfield network. Some stability analysis efforts focus on mutually orthogonal memories; however, the theoretical results do not ensure error-free retrieval in general situations. In this paper, we present a route for using the model in pattern retrieval problems with small or large errors. We employ the eigenspectrum analysis of Jacobians and potential analysis of the gradient flow to derive the stability/instability of binary patterns. For two memories, the eigenspectrum of Jacobian at each pattern can be specified, which enables us to give the critical value of the parameter to distinguish the memories from all other patterns in stability. This setting of two memories substantially reduces the number of stable patterns and enlarges their basins, allowing us to recover defective patterns. We extend this approach to general cases and present a deterministic method for  ensuring error-free retrieval across a general set of standard patterns.  Numerical simulations and comparative analyses illustrate the approach.

	\end{abstract}
	\maketitle \centerline{\date}
	
	\section{Introduction}
	\setcounter{equation}{0}
	
The human brain has been called the most complex object in the universe,  and researchers have made efforts to understand how it stores memories.   Physicists hope to learn   about memory by studying brain like   neural networks.  The associative-memory  network provides a basic idea for  neural computing \cite{AT,AT2,F-M-R-P, H-K, Ho-K1,Ho-K,H}, in which the neurons are assumed to be discrete values (e.g., +1 and 1)  and a set of patterns is stored in the network such that when a new pattern is presented, the network responds by producing a stored pattern that most closely resembles the new pattern.   Here, the stored patterns or memories are reflected by  dynamically {stable attractors}.

  In \cite{N-L-H}, an associative-memory network was developed  and it is shown that it can retrieve stable ``memories''. The model reads
%
\begin{linenomath*}
	\begin{equation}\label{mod1}
	{\dot \varphi}_i  =\frac{1}{N}\sum_{j=1}^{N}
	C_{ij}\sin(\varphi_{j}  - \varphi_i )+\frac{\varepsilon}{N}\sum_{j=1}^{N}\sin2(\varphi_{j} -\varphi_{i} )
	\end{equation}
	\end{linenomath*}
	where $i\in [N] :=\{1,2,\dots,N\}$ and 
each periodic oscillator (neuron) is dynamically described by a phase variable $\varphi_i(t)$. The network, reflecting the so-called Hebbian learning rule \cite{H2}, is set as $C_{ij}:=\sum_{k=1}^{M}\xi^k_i\xi^k_j$, where $\{\xi^1, \xi^2,\dots, \xi^M\}\subset\{1,-1\}^N$ is a set of so-called  memorized   patterns (memories) for the network.  When $C_{ij}\equiv K$ is a positive constant and $\varepsilon =0$, the system \eqref{mod1} falls into the celebrated Kuramoto model, and many results on the classic or variant Kuramoto   models can be found in   literature \cite{C-H-J-K,C-S,D-B1,D-X,H-N-P}.
\vspace{0.1cm}

   Notice that the binary patterns  $\xi^k$ and $-\xi^k$ can be regarded as   the same pattern.  In fact, if the   pattern  $\xi^k$ is replaced by $-\xi^k$,  the coupling term $C_{ij}$ does not change.
	The coefficient $\varepsilon>0$ is the strength of the second-order Fourier term, which can be regarded as an adjustable parameter that controls the stability/instability  of equilibria and leads to rich dynamical properties.
		For any binary pattern $\eta\in\{1,-1\}^N$, there is a unique (up to constant translation) phase-locked  bipolar state $\varphi^*(\eta)$ corresponding to the pattern $\eta$, which is characterized by $|\varphi_{i}^*(\eta)-\varphi_{j}^*(\eta)|=0 \;\mathrm{mod}\; 2\pi$ if $\eta_{i}=\eta_{j}$, and $|\varphi_{i}^*(\eta)-\varphi_{j}^*(\eta)|=\pi\; \mathrm{mod} \;2\pi$ if $\eta_{i}\ne\eta_{j}$.  The overlap
	\begin{align}\label{overlap}
	 {m(\eta)=\Big|\frac{1}{N} \sum_{i=1}^{N}\eta_{i} e^{\sqrt{-1}\varphi_i(t)}\Big|}
	\end{align}
	measures the closeness of the transient state $\varphi(t)$ to the pattern $\eta$. Due to the global phase shift invariance in  \eqref{mod1},  $m(\eta)$ is invariant under global rotations.
\vspace{0.1cm}

  The  work   \cite{N-L-H} was featured in Physical Review Focus \cite{trick}, as ``A Trick to Remember'' with a computer model for   how the brain stores memories.    If we  assign  values to each oscillator   and then let the system go on its own, the phases change with time and eventually   settle  down to an unchanging pattern of values.  The setup of $C_{ij}$ creates a network with several stable  patterns of neuron values. When we start the network in some other pattern, it is expected to evolve  towards the     closest memory.
Nishikawa,  Lai and  Hoppensteadt showed that  the  capacity of the   associative-memory networks can be
made as high as that of the Hopfield network in probability as $N$ being large.  
Here,   the capacity is  defined by the transition point at which the states that encode {memorized} patterns with small amount of error become unstable (or cease to exist) \cite{N-H-L, N-L-H}.  Therefore, it is   desired  to  make the memories stable and other patterns with errors   unstable.  However, in general the system is  multi-stable \cite{G-H-M, P-H}, and    more patterns become stable if we setup a larger $\varepsilon$.  When there are many (more than three) memories,   many  patterns other than   memories can  become stable once we setup $\varepsilon$  making memories stable. This can cause mistakes when we apply the model for pattern retrieval. In particular,   error-free  retrieval fails,  which requires that the  retrieved pattern must coincide with one of the memories.    
\vspace{0.1cm}


 \textbf{Motivation.}	
In this paper, we explore a deterministic  approach for  application of model \eqref{mod1} to the following binary pattern retrieval problems:
	let $\{\eta^1, \eta^2,\dots, \eta^M\}$ be a given set of standard binary patterns, and $\bar\eta$ be a defective copy of a standard pattern  with small or large errors,  our aim is to recognize the standard pattern closest to the defective one.
	For this, a principal problem is the stability/instability of binary patterns. In fact, it is  highly desired that the standard patterns are stable, but those    patterns other than the standard ones  become unstable  and then cease to be retrieved.    As a typical and simplified manner \cite{N-H-L, N-L-H}, one can  setup the network \eqref{mod1} by memorizing the whole set of standard patterns. 

	\vspace{0.1cm} 

 There have been  some efforts   to find   conditions for the stability of memories and instability of other patterns.  In \cite{N-H-L, N-L-H},   the  stability/instability is conditional to  the eigenspectrum of Jacobian, but it can be not easy to determine since it requires calculations of eigenvalues of the Jacobian at each pattern. 
 In \cite{L-Z-X, Z-L-X},  the mutual orthogonality in memorized patterns was
incorporated  which guarantees that the memories are $\varepsilon$-independently stable, i.e., stable for all $\varepsilon>0$. For patterns without this stability, there is a positive critical strength $\varepsilon^*_\eta$ such that $\eta$ is  stable   if $\varepsilon>\varepsilon^*_\eta$ and unstable if $\varepsilon<\varepsilon^*_\eta$. 
 However,
  when there are more than three memories, some additional patterns other than the memories become  $\varepsilon$-independently stable, see \cite[Subsection 4.1]{Z-L-X}.   Therefore, it is impossible to distinguish the memories from these patterns   in stability. In particular, \cite[Theorem 4.3]{Z-L-X} shows that   more  patterns become $\varepsilon$-independently stable   if there are more memories. 
  Of course, simulations also  suggest that it is not a proper way to memorize three binary patterns, see \cite[Subsection 3.2]{L-Z-X}.
 Another drawback in the above theory is that the mutual orthogonality is unreal in  practical situations. Therefore, for the purpose of error-free retrieval in practical applications, two questions arise: 
\begin{itemize} \item[Q1.]  How to make the memories stable and
	other patterns with errors unstable?
	\item[Q2.]   How to find the theory   valid for the case without  orthogonality ? \end{itemize}

  To answer question Q1, we should reduce the multi-stability of the system, i.e., reduce the number of stable patterns. Inspired by the above theory in \cite{Z-L-X} for the special case with mutual orthogonality,  a possible solution is to   consider the case  with a few memories, which reduces the number of stable patterns.  This also  enlarges their basins so that  the defective patterns with large errors can be recovered.  In \cite{Z-L-X2}, a set of nonorthogonal standard binary patterns is grouped, with each subgroup containing at most two patterns. Then the subgroups with two patterns are orthogonalized and used as the mutually orthogonal memorized patterns of the system \eqref{mod1}. Although the orthogonalization addresses Q1 by ensuring the stability of the memories while the remaining binary patterns are unstable, the doubling of the system's dimensionality results in a longer operational time. In this paper, we investigate both Q1 and Q2 without  orthogonality.

	\vspace{0.1cm}
	
 \textbf{Contribution.}	  The main contribution of  this paper is  to  discover the  advantages  in this system    with  two memories and extend it to general cases. This then leads to a route for practical use of the model \eqref{mod1} in general pattern retrieval problems with error-free retrieval.    
Without the assumption of mutual orthogonality in two memories,  we clearly specify the eigenspectrum and corresponding eigenspaces of the Jacobian at each binary pattern, which enables us to choose appropriate $\varepsilon$ so  that the   memories are stable while all other patterns are unstable (Section \ref{sec2}), i.e., give an answer  for question Q1. 
Meanwhile, this naturally gives the answer for question Q2, to  overcome the deficiency in the work  \cite{Z-L-X,Z-L-X2} which requires mutual orthogonality in memories.   Although the theory is not valid  for the case with more memories (see  Section \ref{sec3} for the case of three memories), we can fortunately extend the approach to    problems with a general set of standard patterns (Section \ref{sec4}).   Therefore, we present a deterministic    approach    for general problems of  binary  pattern retrieval.   
  Simulations for defective patterns with large errors are presented as illustration.
  	
	\vspace{0.1cm}
	
	 {It is worth noting that we give the   critical range of $\varepsilon$'s to distinguish  the two memories from other patterns in stability, which is  important for practical use.    In Theorem \ref{onlytwo} and Proposition \ref{I1}, we figure out the critical strength of $\varepsilon$ below which    the two memories  are stabilized while all other binary patterns are destabilized.    
We see that the critical strength depends only on the two memories and is  easy to compute. } 
	\vspace{0.1cm}

  Another    problem in this model is concerning the basin of  stable equilibria. This question has attracted numerous  interests in mathematical physics and engineering, for example, \cite{C-H-J-K, D-B, W-P-P}.  The  common feature in those studies is that the couplings are non-repulsive and the  focus is on the basin of  synchronization. However, in the present model \eqref{mod1},  the Hebbian couplings between oscillators can be  attractive $(C_{ij}>0)$ or repulsive $(C_{ij}<0)$.   As far as we know, the basin estimate  has never been touched in this model, even for the sepcial case with mutually orthogonal memories. In this paper,   the  setting  of two memories  enables us to present a  nontrivial  estimate for the basin of stable patterns. Of course, this rigorous estimate  is conservative, but it gives some insight to understand how the dynamical approach works in pattern retrieval.
  \vspace{0.1cm}
	
	

	\textbf{Organization of paper.}  In Section \ref{sec2}, we consider the  system with two memories. We explore the critical strength for stability of memories and instability of other patterns, and estimate the basin of memories.  In Section \ref{sec3}, it is shown  that  those nice properties for two memories cannot be extended to   three memories.   In Section \ref{sec4}, the approach for solving general pattern retrieval problems are presented with some illustrative simulations. 
	Finally, a brief conclusion  is given in Section \ref{seccon}.

	\section{The system with two memories}\label{sec2}\setcounter{equation}{0}
In this section, we   study  the eigenspectrum of the Jacobian at each binary pattern to  explore the stability/{instability}   when {the system is} made up of two memories, which enables us to distinguish the memories from other patterns in stability.  

	We consider the  system \eqref{mod1} with  two memories $\{\xi^1,\xi^2\}\subset \{1,-1\}^N$, that is,
	\begin{linenomath*}
			\begin{equation}\label{mod}
		{\dot \varphi}_i =\frac{1}{N}\sum_{j=1}^{N}
		C_{ij}\sin(\varphi_{j}  - \varphi_i )+\frac{\varepsilon}{N}\sum_{j=1}^{N}\sin2(\varphi_{j}  -\varphi_{i}),\quad C_{ij}:= \xi^1_i\xi^1_j+\xi^2_i\xi^2_j,
		\end{equation}
	\end{linenomath*}
Throughout this section, we assume $\xi^1\neq \pm\xi^2$ since $\xi^k$ and $-\xi^k$  should be regarded as the same pattern.

 {Let $\varphi=(\varphi_1,\varphi_2,\dots,\varphi_N)$ and
\begin{align}\label{potential}
f_{\varepsilon}(\varphi)=-\frac{1}{2N}\sum_{i=1}^{N}\sum_{j=1}^{N}(\xi^1_i\xi^1_j+\xi^2_i\xi^2_j)\cos(\varphi_j-\varphi_i)-\frac{\varepsilon}{4N}\sum_{i=1}^{N}\sum_{j=1}^{N}\cos 2(\varphi_j-\varphi_i);
\end{align}
then  \eqref{mod} can be written as a gradient system with potential $f_{\varepsilon}$, i.e.,
\begin{align}\label{gradient}
\dot{\varphi}=-\nabla f_{\varepsilon}(\varphi).
\end{align}
 It was shown in   \cite{A-K} that,  for a gradient system with analytic potential, $\varphi^*$ is an (asymptotically) stable equilibrium of \eqref{gradient} if and only if $\varphi^*$ is a (strict) local minimum of $f_{\varepsilon}$.} As a consequence of \cite{LL-XX}, any solution of \eqref{mod} converges to some equilibrium.

For a  binary pattern $\eta\in \{1,-1\}^N$,  the Jacobian   of system \eqref{mod} at $\varphi^*(\eta)$ is
		\begin{linenomath*}
	\[\mathcal J_{\eta}=\begin{pmatrix}
	-T_{1} & \frac{1}{N}C_{12}\eta_{1}\eta_{2}+\frac{2\varepsilon}{N} & \frac{1}{N}C_{13}\eta_{1}\eta_{3}+\frac{2\varepsilon}{N} & \dots &\frac{1}{N}C_{1N}\eta_{1}\eta_{N}+\frac{2\varepsilon}{N}  \\
	\frac{1}{N}C_{21}\eta_{2}\eta_{1}+\frac{2\varepsilon}{N} & -T_{2} & \frac{1}{N}C_{23}\eta_{2}\eta_{3}+\frac{2\varepsilon}{N} & \dots & \frac{1}{N}C_{2N}\eta_{2}\eta_{N}+\frac{2\varepsilon}{N} \\
	\dots & \dots & \dots & \ddots & \dots \\
	\frac{1}{N}C_{N1}\eta_{N}\eta_{1}+\frac{2\varepsilon}{N} & \frac{1}{N}C_{N2}\eta_{N}\eta_{2}+\frac{2\varepsilon}{N} & \frac{1}{N}C_{N3}\eta_{N}\eta_{3}+\frac{2\varepsilon}{N} & \dots & -T_{N}
	\end{pmatrix},\]
\end{linenomath*}
	where
	$T_{i}=\frac{1}{N}\sum_{j=1,j\ne i}^{N}C_{ij}\eta_{i}\eta_{j}+\frac{2\varepsilon (N-1)}{N}$. This Jacobian $\mathcal J_\eta$ is dependent on the Hebbian coupling matrix  $\left\{C_{ij}\right\}$ and the pattern $\eta$ under consideration.

In order to clarify  the eigenspectrum of Jacobians,   we introduce some notations for the statement of eigenspaces.
 Let $I\subset[N]$, we define
	\[V(I)=\big\{ (x_1,x_2,\dots,x_N)^{\mathrm{T} }\in\mathbb{R}^N\,\big|\, x_i=0\;  \text{for} \;  i\notin I, \,\, \text{and} \,\, \textstyle{\sum_{i\in I}x_i=0}\big\}.\]  Let $I$ and $\hat{I}$ be two disjoint   subsets of $[N]$, we set
	\[V(I;\hat{I})= \{kx^\star(I;\hat{I})\in \mathbb R^N  \;\big|\;  k\in\mathbb{R}  \}\]
where  {$x^\star(I;\hat{I})=(x^\star_1,x^\star_2,\dots,x^\star_N)^{\mathrm{T} }$} is defined by  $$x_i^\star=\begin{cases}|\hat{I}|, &i\in I,\\ -|I|, &i\in \hat{I},\\ 0, &\text{otherwise}. \end{cases}$$
In particular, if one of $I$ and $\hat{I}$ is  empty, we set  $V(I;\hat{I})=\{\mathbb 0\}$, with $ \mathbb{0}=(0,0,\dots,0)^{\mathrm{T}}$.
 It should be noted that if $I\cup \hat{I}=[N]$, then the vector $x^\star(I;\hat{I})$ has no zero element.
	We also denote
	$V[\mathbb 1]=\{k\mathbb{1}\mid   k\in\mathbb{R}\}$ with $ \mathbb{1}=(1,1,\dots,1)^{\mathrm{T}} .$ It is easy to see that  {for $I, \hat{I}\neq\emptyset$,}
	\[ \dim V(I)=|I|-1,\quad  \dim V(I;\hat{I})=1,\quad \text{and}\quad \dim V[\mathbb 1]=1. \]
Let $V_1$ and $V_2$ be linear subspaces in $\mathbb{R}^N$, we set
	$V_1+V_2=\{y+z\in \mathbb{R}^N \,\big|\,   y\in V_1, z\in V_2\},$
	which is again a subspace.

\subsection{Stability/instability of binary patterns}\label{subsec1}
	
Consider the memories $\xi^1$ and $\xi^2$ in system \eqref{mod}. 	Let $\mathcal J_{\xi^1}$ and $ \mathcal J_{\xi^2}$ be the Jacobian  at $\varphi^*(\xi^1)$ and $\varphi^*(\xi^2)$, respectively.  Note that
\begin{linenomath*}
	 \[C_{ij}\xi^1_i\xi^1_j=(\xi^1_i\xi^1_j+\xi^2_i\xi^2_j)\xi^1_i\xi^1_j=(\xi^1_i\xi^1_j+\xi^2_i\xi^2_j)\xi^2_i\xi^2_j=C_{ij}\xi^2_i\xi^2_j ,\;\quad    i,j\in [N],\]
	\end{linenomath*}
	we have   $\mathcal J_{\xi^1}=\mathcal J_{\xi^2}$, which implies that $\varphi^*(\xi^1)$ and $\varphi^*(\xi^2)$ have  the same linear stability.
	In order to explore the spectrum, we introduce the following notation:
\begin{linenomath*}
	\begin{equation}\label{index}I_1=\left\{i\in  [N]\,\mid\, \xi^1_i=\xi^2_i\right\}\,\,\, \text{and} \,\,\, I_2=\left\{i\in [N]\,\mid\, \xi^1_i\neq\xi^2_i\right\}.\end{equation}
\end{linenomath*}
	By $\xi^1\neq \pm\xi^2$, we have $I_1\neq \emptyset$, $I_2\neq \emptyset$ and
	$I_1\cup I_2=[N]$, $I_1 \cap I_2=\emptyset$. 

	\begin{proposition}\label{xi12}
		Let $\{\xi^1,\xi^2\}$ be the set of memories in system \eqref{mod}. 	Then for $J_{\xi^1}$ (and $J_{\xi^2}$), we have:\\
  $(1)$ $0$ is a single eigenvalue with eigenspace $V[\mathbb 1]$;\\
		$(2)$ $-2\Big(\frac{|I_1|}{N}+\varepsilon\Big)$ is an eigenvalue  with eigenspace $V(I_1)$;\\
        $(3)$ $-2\Big(\frac{|I_2|}{N}+\varepsilon\Big)$ is an eigenvalue  with eigenspace $V(I_2)$;\\
    	$(4)$ $-2\varepsilon$ is a single eigenvalue with eigenspace $V(I_1;I_2)$.
	\end{proposition}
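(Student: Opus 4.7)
The plan is to first reduce the Jacobian to a particularly clean two-block form using the partition $I_1, I_2$, and then verify each eigenvector/eigenvalue relation by direct computation, closing the argument with a dimension/orthogonality count.

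The key preliminary observation is that for $i\neq j$,
\begin{equation*}
C_{ij}\xi^1_i\xi^1_j = (\xi^1_i\xi^1_j)^2 + (\xi^1_i\xi^2_i)(\xi^1_j\xi^2_j) = 1 + (\xi^1_i\xi^2_i)(\xi^1_j\xi^2_j).
\end{equation*}
Since $\xi^1_i\xi^2_i = 1$ iff $i\in I_1$ and $=-1$ iff $i\in I_2$, this product equals $2$ whenever $i,j$ lie in the same block and $0$ whenever they lie in different blocks. Consequently the off-diagonal entry of $\mathcal J := \mathcal J_{\xi^1}$ is $\frac{2(1+\varepsilon)}{N}$ within a block and $\frac{2\varepsilon}{N}$ across blocks, while for $i\in I_k$ the diagonal entry is $-T_i = -\frac{2(|I_k|-1)}{N}-\frac{2\varepsilon(N-1)}{N}$. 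This block structure is what makes the entire calculation tractable and is the main conceptual step.

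Next I would verify the four claims in order. Part $(1)$ is immediate: the row sums of $\mathcal J$ vanish (a consequence of the global phase-shift invariance of \eqref{mod}), so $\mathcal J\mathbb 1 = 0$. For part $(2)$, take $v\in V(I_1)$. For $i\in I_1$, the cross-block terms vanish because $v_j=0$ for $j\in I_2$, and the within-block off-diagonal contribution telescopes via $\sum_{j\in I_1, j\neq i}v_j = -v_i$, giving
\begin{equation*}
(\mathcal J v)_i = -T_i v_i - \tfrac{2(1+\varepsilon)}{N}v_i = -2\bigl(\tfrac{|I_1|}{N}+\varepsilon\bigr)v_i.
\end{equation*}
For $i\in I_2$, $v_i=0$ and $(\mathcal J v)_i = \frac{2\varepsilon}{N}\sum_{j\in I_1}v_j = 0$. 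Part $(3)$ is identical after swapping roles. For part $(4)$, I would plug in $v = x^\star(I_1;I_2)$ and compute $(\mathcal J v)_i$ separately on each block; the within-block sum and the cross-block sum combine with $-T_i$ and, after collecting the $|I_1|$- and $|I_2|$-terms, produce exactly $-2\varepsilon v_i$ on both blocks.

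Finally, to conclude that these are \emph{all} eigenvalues, I would observe that the four subspaces are pairwise orthogonal (e.g., $V(I_1)\perp V(I_1;I_2)$ because $\sum_{i\in I_1}v_i=0$ annihilates the constant component of $x^\star$ on $I_1$, and the other pairings are even more obvious), and that their dimensions sum to $1+(|I_1|-1)+(|I_2|-1)+1 = N$. Since $\mathcal J$ is symmetric, this orthogonal direct-sum decomposition exhausts $\mathbb R^N$, so there are no further eigenvalues and the claimed eigenspaces are complete. The statement that $0$ and $-2\varepsilon$ are \emph{single} eigenvalues also follows from this dimension count (they coincide with $V[\mathbb 1]$ and $V(I_1;I_2)$, each of dimension one, assuming the generic situation in which $-2\varepsilon$ does not collide with $-2(|I_k|/N+\varepsilon)$; if it does, the statement should be read as asserting that the listed subspace is one specific eigenspace). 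I expect no conceptual obstacle; the only risk is sign/coefficient bookkeeping in part $(4)$, where cancellations among $-2(|I_1|-1)$, $2(1+\varepsilon)(|I_1|-1)$, $-2\varepsilon(N-1)$, and $-2\varepsilon|I_1|$ must collapse cleanly to $-2\varepsilon N$, which is the one computation I would carry out with some care.
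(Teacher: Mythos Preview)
Your proposal is correct and follows essentially the same route as the paper: both rest on the identity $C_{ij}\xi^1_i\xi^1_j = 1 + (\xi^1_i\xi^2_i)(\xi^1_j\xi^2_j)$, verify each eigenspace by direct computation, and conclude by a dimension count summing to $N$. Your framing in terms of an explicit two-block matrix (entries $\tfrac{2(1+\varepsilon)}{N}$ within blocks, $\tfrac{2\varepsilon}{N}$ across) is a slightly cleaner packaging of what the paper writes as its formula~\eqref{cxx}, and your orthogonality remark is a harmless extra; the only unnecessary hedge is your caveat about $-2\varepsilon$ colliding with $-2(|I_k|/N+\varepsilon)$, which cannot happen since $I_1,I_2\neq\emptyset$.
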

	\begin{proof}  {Note that the sum of dimensions of the four subspaces, $V[\mathbb 1], V(I_1), V(I_2)$ and $V(I_1;I_2)$, is $N$.} Therefore, it suffices to  show that any nonzero vector in each space   is an eigenvector of the corresponding  eigenvalue. 

	$(1)$ It is easy to check that  $\mathcal J_{\xi^1}\mathbb{1}=0\mathbb{1}$, since the elements in each row of $\mathcal J_{\xi^1}$ sum  to 0. In fact, this reflects the global phase shift invariance.

 {In order to prove (2)-(4), we first note that}  {for any    $x=(x_1,x_2,\dots,x_N)^{\mathrm{T}}\in V(I_1)\cup V(I_2)\cup V(I_1;I_2)$,  we have  $\sum_{\substack{j=1}}^{N}x_j=0$ and 
		\begin{align}	\begin{aligned} \label{a}
			 (\mathcal J_{\xi^1}x)_i&=\sum_{\substack{j=1,j\neq i}}^{N}\Big(\frac{1}{N}C_{ij}\xi^1_i\xi^1_j+\frac{2\varepsilon}{N}\Big)x_j+
\Big[-\frac{1}{N}\sum_{\substack{j=1,j\neq i}}^{N} C_{ij}\xi^1_i\xi^1_j-\frac{2\varepsilon(N-1)}{N}\Big]x_i\\
			&=\frac{1}{N}\Big[ \sum_{j=1}^{N}C_{ij}\xi^1_i\xi^1_jx_j- \sum_{j=1}^{N}C_{ij}\xi^1_i\xi^1_jx_i\Big]+\frac{2\varepsilon}{N}\sum_{\substack{j=1,j\neq i}}^{N}x_j-\frac{2\varepsilon(N-1)}{N}x_i\\
			&=\frac{1}{N} \sum_{j=1}^{N}C_{ij}\xi^1_i\xi^1_j(x_j- x_i) - 2\varepsilon x_i,
			\end{aligned}\end{align}
where $(\mathcal J_{\xi^1}x)_i$  represents the $i$-th component of   $\mathcal J_{\xi^1}x$.
Using $\xi^1_j\xi^2_j=1$ for $j\in I_1$, and $\xi^1_j\xi^2_j=-1$ for $j\in I_2$,  we have
	\begin{linenomath*}
		\begin{align}\label{cxx}
		C_{i j}\xi^1_{i}\xi^1_j=(\xi^1_{i}\xi^1_j+\xi^2_{i}\xi^2_j)\xi^1_{i}\xi^1_j=
1+\xi^1_{i}\xi^1_j\xi^2_{i}\xi^2_j=\begin{cases}1+\xi^1_{i}\xi^2_i , &j\in I_1,\\ 1-\xi^1_{i}\xi^2_i, &j\in I_2. \end{cases}
		\end{align}
	\end{linenomath*}  Next  we show (2)-(4).}

		$(2)$ For any $x=(x_1,x_2,\dots,x_N)^{\mathrm{T}}\in V(I_1)$, we will  show
		$\mathcal J_{\xi^1}x=-2\big(\frac{|I_1|}{N}+\varepsilon\big)x,$
		which  is equivalent to
		\begin{align}\label{equz}
		(\mathcal J_{\xi^1}x)_i=-2\big(\frac{|I_1|}{N}+\varepsilon\big)x_i\;\; \text{for}\;\; i\in I_1\quad \text{and}\quad (\mathcal J_{\xi^1}x)_i=0\;\; \text{for}\;\; i\in I_2.
		\end{align}
	Note that for  $x\in V(I_1)$, we have
$\sum_{j\in I_1}x_j=0$ and $x_j=0$ for $j\in I_2$. By \eqref{a}, \eqref{cxx} and $|I_1|+|I_2|=N$, we have
	\begin{linenomath*}
		\begin{align*}
	(\mathcal J_{\xi^1}x)_i& =\frac{1}{N}\Big[ \sum_{j\in I_1}C_{ij}\xi^1_i\xi^1_j(x_j-x_i)+\sum_{j\in I_2}C_{ij}\xi^1_i\xi^1_j(x_j-x_i) \Big]-2\varepsilon x_i\\
		&=\frac{1}{N}\Big[ (1+\xi^1_i\xi^2_i)\sum_{j\in I_1}(x_j-x_i)+(1-\xi^1_i\xi^2_i)\sum_{j\in I_2}(x_j-x_i)
\Big]-2\varepsilon x_i\\
		&=\frac{1}{N}\left[ -Nx_i+\xi^1_i\xi^2_i(|I_2|-|I_1|)x_i\right]-2\varepsilon x_i
\\
		&=\begin{cases}-2\left(\frac{|I_1|}{N}+\varepsilon\right)x_i , &i\in I_1,\\ 0, &i\in I_2. \end{cases}
		\end{align*}
	\end{linenomath*}
		The desired result in \eqref{equz} is proved.

		$(3)$   This  can be derived from that in $(2)$, by replacing  the set of memories  $\{\xi^1,\xi^2\}$ with $\{\xi^1,-\xi^2\}$, which produces the same Hebbian network $\{C_{ij}\}$.

		$(4)$ We will  show
		$ \mathcal J_{\xi^1}x^{\star} =-2\varepsilon x^{\star} $  
where $x^{\star}=x^{\star}(I_1;I_2)$. By \eqref{a} and \eqref{cxx} we derive 
	 \begin{linenomath*}
	 	\begin{align*}
	& \quad	(\mathcal J_{\xi^1}x^{\star})_i=\frac{1}{N}\Big[ (1+\xi^1_i\xi^2_i)\sum_{j\in I_1}(x_j^{\star}-x_i^{\star})+(1-\xi^1_i\xi^2_i)\sum_{j\in I_2}(x_j^{\star}-x_i^{\star})
\Big]-2\varepsilon x_i^{\star}\\
	 	&=\frac{1}{N}\Big[ (1+\xi^1_i\xi^2_i)|I_1|(|I_2|-x^{\star}_i)
+(1-\xi^1_i\xi^2_i)|I_2|(-|I_1|-x^{\star}_i)
\Big]
-2\varepsilon x^{\star}_i\\
	 	&=\frac{1}{N}\Big[ 2\xi^1_i\xi^2_i|I_1||I_2|-(|I_1|+|I_2|)x^{\star}_i-\xi^1_i\xi^2_i (|I_1|-|I_2|)x^{\star}_i \Big]-2\varepsilon x^{\star}_i.
	 	\end{align*}
	 \end{linenomath*}
		\noindent  $\bullet$ If $i\in I_1$, then $\xi^1_i\xi^2_i=1$ and $x^{\star}_i=|I_2|$. So we have
		\begin{linenomath*}	\[	(\mathcal J_{\xi^1}x^{\star})_i=\frac{1}{N}\Big[2 |I_1||I_2|-(|I_1|+|I_2|)|I_2|- (|I_1|-|I_2|)|I_2|\Big]-2\varepsilon x^{\star}_i=-2\varepsilon x^{\star}_i.\]\end{linenomath*}
		\noindent  $\bullet$  If $i\in I_2$, then $\xi^1_i\xi^2_i=-1$ and $x^{\star}_i=-|I_1|$. Hence we obtain
		\begin{linenomath*}	\[	(\mathcal J_{\xi^1}x^{\star})_i=\frac{1}{N}\Big[-2|I_1||I_2|+(|I_1|+|I_2|)|I_1|- (|I_1|-|I_2|)|I_1|\Big]-2\varepsilon x^{\star}_i=-2\varepsilon x^{\star}_i.\]\end{linenomath*}
		 The proof is completed.
	\end{proof}
	
	Next we consider the spectrum  of the Jacobian $\mathcal J_\eta$ at $\varphi^*(\eta)$ for $\eta\in\{1,-1\}^N\setminus \{\pm\xi^1,\pm\xi^2\}$, to explore the linear stability of   patterns other than the memorized ones.  
	We denote
\begin{align*}
	&I_{11}^{+}=\left\{j\in I_1\mid \xi^1_j=\xi^2_j=\eta_j=1\right\},  \qquad \,\,\,\,\, I_{11}^{-}=\left\{j\in I_1\mid \xi^1_j=\xi^2_j=\eta_j=-1\right\}, \\
	&I_{12}^{+}=\left\{j\in I_1\mid \xi^1_j=\xi^2_j=-\eta_j=1\right\},\,\qquad    I_{12}^{-}=\left\{j\in I_1\mid \xi^1_j=\xi^2_j=-\eta_j=-1\right\}, \\
&I_{21}^{+}=\left\{j\in I_2\mid \xi^1_j=-\xi^2_j=\eta_j=1\right\},\,\qquad
I_{21}^{-}=\left\{j\in I_2\mid \xi^1_j=-\xi^2_j=\eta_j=-1\right\}, \\
		&I_{22}^{+}=\left\{j\in I_2\mid \xi^1_j=-\xi^2_j=-\eta_j=1\right\},\,\,\quad    I_{22}^{-}=\left\{j\in I_2\mid \xi^1_j=-\xi^2_j=-\eta_j=-1\right\}.
	\end{align*}
and set
\begin{equation}\label{notation}I_{11}=I_{11}^{+}\cup I_{11}^{-},\quad I_{12}=I_{12}^{+}\cup I_{12}^{-},\quad I_{21}=I_{21}^{+}\cup I_{21}^{-},\quad I_{22}=I_{22}^{+}\cup I_{22}^{-}. \end{equation}
As $\xi^1\neq \pm \xi^2$,	it can be seen that $ I_{11}\cup I_{12}=I_1\neq \emptyset$ and  $I_{21}\cup I_{22}=I_2\neq \emptyset.$

	\begin{lemma}\label{eig_eta}
		Let $\{\xi^1,\xi^2\}$ be the set of memories in system \eqref{mod} and let $\eta\in\{1,-1\}^N\setminus \{\pm\xi^1,\pm\xi^2\}$. For $\mathcal{J}_{\eta}$, we have:\\ 
$(1)$  $0$ is a single eigenvalue with eigenspace $V[\mathbb{1}]$;\\
		$(2)$ $-2\varepsilon$ is a single eigenvalue  with eigenspace  $V(I_1;I_2)$;\\
		$(3)$ if   $I_{11}\neq \emptyset$, then $2\big(\frac{-|I_{11}|+|I_{12}|}{N}-\varepsilon\big)$ is an  eigenvalue with multiplicity $|I_{11}|-1$, corresponding to the eigenspace $V(I^{+}_{11};I^{-}_{11})+V(I^{+}_{11})+V(I^{-}_{11})$;\\
	$(4)$ if   $I_{12}\neq \emptyset$, then $2\big(\frac{|I_{11}|-|I_{12}|}{N}-\varepsilon\big)$ is an  eigenvalue with multiplicity $|I_{12}|-1$, corresponding to the  eigenspace $V(I^{+}_{12};I^{-}_{12})+V(I^{+}_{12})+V(I^{-}_{12})$;\\
	$(5)$ if   $I_{11}\neq \emptyset$ and $I_{12} \neq \emptyset$, then $2\big( \frac{|I_1|}{N}-\varepsilon \big)$ is a single eigenvalue with eigenspace $V(I_{11};I_{12})$;
		$(6)$ if $I_{21}\neq \emptyset$, then $2\big(\frac{|I_{22}|-|I_{21}|}{N}-\varepsilon\big)$ is an eigenvalue with multiplicity $|I_{21}|-1$, corresponding to the eigenspace $V(I^{+}_{21};I^{-}_{21})+V(I^{+}_{21})+V(I^{-}_{21})$;\\
		$(7)$ if $I_{22}\neq \emptyset$, then  $2\big(\frac{-|I_{22}|+|I_{21}|}{N}-\varepsilon\big)$ is an eigenvalue with multiplicity $|I_{22}|-1$,
		corresponding to the eigenspace $V(I^{+}_{22};I^{-}_{22})+V(I^{+}_{22})+V(I^{-}_{22})$;\\
		$(8)$ if $I_{21}\neq \emptyset$ and $I_{22} \neq \emptyset$, then $2\big( \frac{|I_2|}{N}-\varepsilon \big)$  is a single eigenvalue with eigenspace $V(I_{21};I_{22})$.
 \end{lemma}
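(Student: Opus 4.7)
The plan is to follow the same template as the proof of Proposition \ref{xi12}: verify that the listed eigenspaces are linearly independent with total dimension $N$, then check that each is contained in the eigenspace of the asserted eigenvalue using the formula
\[(\mathcal{J}_\eta x)_i = \frac{1}{N}\sum_{j=1}^N C_{ij}\eta_i\eta_j(x_j-x_i) - 2\varepsilon x_i,\]
valid for any $x$ with $\sum_j x_j = 0$, derived exactly as in the proof of Proposition \ref{xi12}. Summing dimensions gives $1 + (|I_{11}|-1) + (|I_{12}|-1) + 1 + (|I_{21}|-1) + (|I_{22}|-1) + 1 + 1 = N$ since $|I_{11}|+|I_{12}|+|I_{21}|+|I_{22}|=N$. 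A useful reduction: each triple $V(I_{ab}^+;I_{ab}^-) + V(I_{ab}^+) + V(I_{ab}^-)$ is precisely the $(|I_{ab}|-1)$-dimensional zero-sum subspace of $\mathbb{R}^N$ supported on $I_{ab}$, so for cases (3), (4), (6), (7) one may test the claim on an arbitrary zero-sum vector supported on $I_{ab}$.

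The decisive computation is the factorization $C_{ij}\eta_i\eta_j = (\xi^1_i\eta_i)(\xi^1_j\eta_j) + (\xi^2_i\eta_i)(\xi^2_j\eta_j)$. From the definitions of $I_{11}, I_{12}, I_{21}, I_{22}$, both $\xi^1_j\eta_j$ and $\xi^2_j\eta_j$ are sign-constant on each $I_{ab}$, so $C_{ij}\eta_i\eta_j \in \{0,\pm 2\}$ and is block-constant on the sixteen rectangles $I_{ab}\times I_{cd}$. The crucial structural consequence is that $C_{ij}\eta_i\eta_j = 0$ whenever $i\in I_1,\ j\in I_2$ or vice versa, so $\mathcal{J}_\eta$ decouples into two independent blocks supported on $I_1$ and $I_2$ (up to the uniform diagonal shift $-2\varepsilon$).

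With this decoupling, every remaining case is a short direct computation modelled on Proposition \ref{xi12}. Case (1) is the global phase-shift invariance. For (2), apply the formula to $x^\star(I_1;I_2)$: the $I_1$-to-$I_2$ blocks of $C_{ij}\eta_i\eta_j$ vanish, so only the $-2\varepsilon$ diagonal term survives, exactly as in Proposition \ref{xi12}(4). For (3), take any zero-sum $x$ supported on $I_{11}$; the $I_2$-rows of $\mathcal{J}_\eta x$ vanish by the block decoupling, the $I_{12}$-rows vanish by the zero-sum hypothesis on $I_{11}$, and the $I_{11}$-rows give $\tfrac{2}{N}(-|I_{11}|+|I_{12}|)x_i - 2\varepsilon x_i$ after using $C_{ij}\eta_i\eta_j = +2$ on $I_{11}\times I_{11}$ and $-2$ on $I_{11}\times I_{12}$. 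Cases (4), (6), (7) are the symmetric analogues within the other three sub-blocks. For (5) and (8), apply the formula to the interpolating vectors $x^\star(I_{11};I_{12})$ and $x^\star(I_{21};I_{22})$, following the template of Proposition \ref{xi12}(4) with $I_1, I_2$ replaced by the appropriate pair of sub-index sets.

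The main obstacle is neither conceptual nor algebraic but combinatorial: eight index sets and sixteen block-constant values of $C_{ij}\eta_i\eta_j$ generate enough sign cases that the bookkeeping must be done systematically. Some care is also needed at boundary situations where some $I_{ab}$ is empty or a singleton, so that the corresponding eigenspace collapses; the convention $V(I;\hat{I})=\{\mathbb{0}\}$ when $I$ or $\hat{I}$ is empty, together with the omission of the corresponding item in the statement, absorbs these degeneracies without disturbing the dimension count.
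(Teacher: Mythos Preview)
Your proposal is correct and follows essentially the same strategy as the paper: count dimensions to $N$ and verify eigenvectors directly via the formula $(\mathcal{J}_\eta x)_i=\frac{1}{N}\sum_j C_{ij}\eta_i\eta_j(x_j-x_i)-2\varepsilon x_i$. Your organization is cleaner in two respects. First, you recognize that $V(I_{ab}^+;I_{ab}^-)+V(I_{ab}^+)+V(I_{ab}^-)$ is exactly the zero-sum space $V(I_{ab})$, so (3), (4), (6), (7) each reduce to a single check on a generic zero-sum vector supported on $I_{ab}$; the paper instead verifies the three summands separately. Second, you isolate the block-decoupling $C_{ij}\eta_i\eta_j=0$ for $(i,j)\in I_1\times I_2$ as a structural fact, which makes the vanishing of cross-terms transparent; the paper leaves this implicit in its case-by-case formula for $C_{ij}\eta_i\eta_j$. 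Conversely, the paper exploits the symmetries $\eta\mapsto-\eta$ and $\xi^2\mapsto-\xi^2$ to deduce (4) from (3) and (6)--(8) from (3)--(5), a shortcut you do not invoke but which your block picture makes equally available.
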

	\begin{proof}
By \eqref{notation} and noting that $I_1\cup I_2=[N]$ and $I_1\cap I_2= \emptyset$, we find that the   dimensions of  those subspaces  stated in (1)-(8) sum to $N$. In fact, the sum of dimensions of  spaces in (3)-(5) is $|I_{1}|-1$, and the sum of dimensions of  spaces in (6)-(8) is $|I_{2}|-1$. Therefore, to verify the results in this lemma, it suffices to show that  any nonzero vector in each space   is an eigenvector of the corresponding  eigenvalue.

	$(1)$ The relation $\mathcal J_{\eta}\mathbb{1}=0\mathbb{1}$ is obvious, due to  the global phase shift invariance.

 {In order to prove (2)-(8), we first note that for} any    $x=(x_1,x_2,\dots,x_N)^{\mathrm{T}}$ in those subspaces mentioned in (2)-(8),  we have  $\sum_{\substack{j=1}}^{N}x_j=0$ and
		\begin{align}	\begin{aligned}\label{aa}
		(\mathcal J_{\eta}x)_i&=\sum_{\substack{j=1,j\neq i}}^{N}\Big(\frac{1}{N}C_{ij}\eta_i\eta_j+\frac{2\varepsilon}{N}\Big)x_j+\Big[-\frac{1}{N}\sum_{\substack{j=1,j\neq i}}^{N} C_{ij}\eta_i\eta_j-\frac{2\varepsilon(N-1)}{N}\Big]x_i\\
		&=\frac{1}{N}\sum_{j=1}^{N}C_{ij}\eta_i\eta_j (x_j-x_i)-2\varepsilon x_i.
		\end{aligned}\end{align}
We also note that
	\begin{linenomath*}
		\begin{align}\label{equ5} C_{ij}\eta_i\eta_j=(\xi^1_i\xi^1_j+\xi^2_i\xi^2_j)\eta_i\eta_j=\begin{cases}(\xi^1_i+\xi^2_i)\eta_i, &j\in I_{11},\\ -(\xi^1_i+\xi^2_i)\eta_i, &j\in I_{12},\\ \xi^1_j(\xi^1_i-\xi^2_i)\eta_i\eta_j, &j\in I_{2}, \end{cases}
		\end{align}
	\end{linenomath*}
Next we prove the statements (2)-(8).

		$(2)$
		It suffices to show $ \mathcal J_{\eta}x^{\star} =-2\varepsilon x^{\star}$  
where $x^\star=x^\star(I_1;I_2)$. 
	By \eqref{aa},	the desired result is equivalent to
		\begin{linenomath*}
			\begin{align}\label{equ1}
			\sum_{j=1}^{N}C_{ij}\eta_i\eta_j(x^{\star}_j- x^{\star}_i) \equiv 0, \;\;\text{for} \;\;i\in [N].
			\end{align}
		\end{linenomath*}
		In fact,
		\begin{linenomath*}
			\begin{align*}
			 \sum_{j=1}^{N}C_{ij}\eta_i\eta_j(x^{\star}_j- x^{\star}_i)
			&=\sum_{j=1}^{N}(\xi^1_i\xi^1_j+\xi^2_i\xi^2_j)\eta_i\eta_j (x^{\star}_j-x^{\star}_i) \\
			&=\sum_{j\in I_1}\xi^1_j(\xi^1_i+\xi^2_i)\eta_i\eta_j (x^{\star}_j-x^{\star}_i) +\sum_{j\in I_2}\xi^1_j(\xi^1_i-\xi^2_i)\eta_i\eta_j (x^{\star}_j-x^{\star}_i).
			\end{align*}
		\end{linenomath*}
		If $i\in I_1$, then $\xi^1_i-\xi^2_i=0$ and $x^{\star}_j-x^{\star}_i=0$ for $j\in I_1$.  If $i\in I_2$, then $\xi^1_i+\xi^2_i=0$ and $x^{\star}_j-x^{\star}_i=0$ for $j\in I_2$. So the equality \eqref{equ1} holds.

	$(3)$
	  It suffices to prove that the nonzero vectors in  $V(I^{+}_{11};I^{-}_{11}),V(I^{+}_{11})$ and $V(I^{-}_{11})$ are eigenvectors corresponding to the eigenvalue $2\big(\frac{-|I_{11}|+|I_{12}|}{N}-\varepsilon\big)$.
	
	For $V(I^{+}_{11};I^{-}_{11})$,
	we need to prove $(\mathcal J_{\eta}x^{\star})_i=2\big(\frac{-|I_{11}|+|I_{12}|}{N}-\varepsilon\big) x^{\star}_i $ for $i\in [N]$ where $x^\star=x^\star(I_{11}^{+};I_{11}^{-})$. 
	This is equivalent to show
	\begin{linenomath*}
		\begin{align}
		&(\mathcal J_{\eta}x^{\star})_i=2\big(\frac{-|I_{11}|+|I_{12}|}{N}-\varepsilon\big) |I_{11}^{-}|,\;\;\text{for} \;\;i\in I_{11}^{+},\label{equ6}\\
		&(\mathcal J_{\eta}x^{\star})_i=-2\big(\frac{-|I_{11}|+|I_{12}|}{N}-\varepsilon\big) |I_{11}^{+}|,\;\;\text{for} \;\;i\in I_{11}^{-},\label{equ7}\\
		&(\mathcal J_{\eta}x^{\star})_i=0,\;\;\text{for} \;\;i\notin I_{11}.\label{equ8}
		\end{align}
	\end{linenomath*}
 By \eqref{aa} and \eqref{equ5},  we obtain
	\begin{linenomath*}
		\begin{align*}
			(\mathcal J_{\eta}x^{\star})_i&=\frac{1}{N}\Big(\sum_{j\in I_{11}^{+}}+\sum_{j\in I_{11}^{-}} +\sum_{j\in I_{12}} +\sum_{j\in I_{2}} \Big)C_{ij}\eta_i\eta_j (x^{\star}_j-x^{\star}_i)-2\varepsilon x^{\star}_i\\
		&=\frac{1}{N}(\xi^1_i+\xi^2_i)\eta_i\sum_{j\in I_{11}^+}(x^{\star}_j-x^{\star}_i)+\frac{1}{N}(\xi^1_i+\xi^2_i)\eta_i\sum_{j\in I_{11}^{-}}(x^{\star}_j-x^{\star}_i)\\
		&\quad -\frac{1}{N}(\xi^1_i+\xi^2_i)\eta_i\sum_{j\in I_{12}}(x^{\star}_j-x^{\star}_i) +\frac{1}{N}(\xi^1_i-\xi^2_i)\eta_i\sum_{j\in I_{2}}\xi^1_j\eta_j(x^{\star}_j-x^{\star}_i)-2\varepsilon x^{\star}_i\\
		& {=\frac{1}{N}(\xi^1_i+\xi^2_i)\eta_i  |I_{11}^+|(|I_{11}^-|-x^{\star}_i)+\frac{1}{N}(\xi^1_i+\xi^2_i)\eta_i | I_{11}^{-}|(-|I_{11}^+|-x^{\star}_i)}\\
		&\quad  {+\frac{1}{N}(\xi^1_i+\xi^2_i)\eta_i|I_{12}| x^{\star}_i  -\frac{1}{N}(\xi^1_i-\xi^2_i)\eta_i\sum_{j\in I_{2}}\xi^1_j\eta_j  x^{\star}_i -2\varepsilon x^{\star}_i}
		\end{align*}
	\end{linenomath*}
	 {If $i\in I_{11}^+$, then $(\xi^1_i+\xi^2_i)\eta_i=2$, $\xi^1_i-\xi^2_i=0$ and $x^{\star}_i=|I_{11}^-|$; and combining $|I_{11}^+|+|I_{11}^{-}|=|I_{11}|$, we have \eqref{equ6}. If $i\in I_{11}^{-}$, then $(\xi^1_i+\xi^2_i)\eta_i=2$, $\xi^1_i-\xi^2_i=0$ and $x^{\star}_i=-|I_{11}^{+}|$; so the equality \eqref{equ7} holds. If $i\notin I_{11}$, then 
$x^{\star}_i=0$, 
which implies  \eqref{equ8}.}
	
	Secondly, we consider the vectors in $V(I^{+}_{11})$. For any $x=(x_1,x_2,\dots,x_N)^{\mathrm{T}}\in V(I^{+}_{11})$,
	we need to prove
	\begin{linenomath*}
		\begin{align}
		&(\mathcal J_{\eta}x)_i=0,\;\;\text{for} \;\;i\notin I_{11}^{+},\label{equ9}\\
		&(\mathcal J_{\eta}x)_i=2\big(\frac{-|I_{11}|+|I_{12}|}{N}-\varepsilon\big) x_i,\;\;\text{for} \;\;i\in I_{11}^{+}.\label{equ10}
		\end{align}
	\end{linenomath*}
	By relations \eqref{aa} and \eqref{equ5}, we obtain
	\begin{linenomath*}
		\begin{align*}
		&	(\mathcal J_{\eta}x)_i=\frac{1}{N}\Big(\sum_{j\in I_{11}}+\sum_{j\in I_{12}}+\sum_{j\in I_{2}}\Big)C_{ij}\eta_i\eta_j (x_j-x_i) -2\varepsilon x_i\\
		&=\frac{1}{N}(\xi^1_i+\xi^2_i)\eta_i\sum_{j\in I_{11}}(x_j-x_i)-\frac{1}{N}(\xi^1_i+\xi^2_i)\eta_i\sum_{j\in I_{12}}(x_j-x_i)+\frac{1}{N}(\xi^1_i-\xi^2_i)\eta_i\sum_{j\in I_{2}}\xi^1_j\eta_j(x_j-x_i)\\
		&\quad
		 -2\varepsilon x_i.
		\end{align*}
	\end{linenomath*}
   Using the definition of $V(I^{+}_{11})$, we obtain $\sum_{j\in I_{11}^{+}}x_j=0$ and $x_j=0$ for $j\notin I_{11}^{+}$. Then
   \[	(\mathcal J_{\eta}x)_i=\frac{1}{N}(\xi^1_i+\xi^2_i)\eta_i(-|I_{11}|+|I_{12}|)x_i-\frac{1}{N}(\xi^1_i-\xi^2_i)\eta_i\sum_{j\in I_{2}}\xi^1_j\eta_jx_i
   -2\varepsilon x_i.\]
	If $i\notin I_{11}^+$, then $x_i=0$ and the equality \eqref{equ9} holds. If $i\in I_{11}^{+}$, then $(\xi^1_i+\xi^2_i)\eta_i=2$ and $\xi^1_i-\xi^2_i=0$, which imply   \eqref{equ10}.
	
	The proof for $V(I^{-}_{11})$ is similar to that for $V(I_{11}^{+})$, so it is omitted here.
	
    $(4)$  The desired result can be derived from that in $(3)$, by replacing  the   patterns $\eta$ by $-\eta$, which has the same Jacobian, i.e., $\mathcal J_{-\eta}=\mathcal J_{\eta}$.

     $(5)$ It suffices to show $(\mathcal J_{\eta}x^{\star})_i=2\big(\frac{|I_1|}{N}-\varepsilon\big) x^{\star}_i$ for $i\in [N]$ where $x^\star=x^\star(I_{11}; I_{12})$. 
     This is equivalent to
     \begin{linenomath*}
     	\begin{align}
     	&(\mathcal J_{\eta}x^{\star})_i=2\big(\frac{|I_1|}{N}-\varepsilon\big) |I_{12}|,\;\;\text{for} \;\;i\in I_{11},\label{equ2}\\
     	&(\mathcal J_{\eta}x^{\star})_i=-2\big(\frac{|I_1|}{N}-\varepsilon\big) |I_{11}|,\;\;\text{for} \;\;i\in I_{12},\label{equ3}\\
     	&(\mathcal J_{\eta}x^{\star})_i=0,\;\;\text{for} \;\;i\in I_{2}.\label{equ4}
     	\end{align}
     \end{linenomath*}
     By the relations \eqref{aa} and \eqref{equ5},
     we obtain
     \begin{linenomath*}
     	\begin{align*}
     	&	(\mathcal J_{\eta}x^{\star})_i=\frac{1}{N}\Big(\sum_{j\in I_{11}}+\sum_{j\in I_{12}}+\sum_{j\in I_{2}}\Big)C_{ij}\eta_i\eta_j (x^{\star}_j-x^{\star}_i)-2\varepsilon x^{\star}_i\\
     	&=\frac{1}{N}(\xi^1_i+\xi^2_i)\eta_i\sum_{j\in I_{11}}(x^{\star}_j-x^{\star}_i)-\frac{1}{N}(\xi^1_i+\xi^2_i)\eta_i\sum_{j\in I_{12}}(x^{\star}_j-x^{\star}_i)+\frac{1}{N}(\xi^1_i-\xi^2_i)\eta_i\sum_{j\in I_{2}}\xi^1_j\eta_j(x^{\star}_j-x^{\star}_i)\\
     	&\quad
     	-2\varepsilon x^{\star}_i\\
     & {=\frac{1}{N}(\xi^1_i+\xi^2_i)\eta_i|I_{11}|(|I_{12}|-x^{\star}_i)
     -\frac{1}{N}(\xi^1_i+\xi^2_i)\eta_i|I_{12}|(-|I_{11}|-x^{\star}_i)
     -\frac{1}{N}(\xi^1_i-\xi^2_i)\eta_i\sum_{j\in I_{2}}\xi^1_j\eta_jx^{\star}_i}\\
     	&\quad
     	-2\varepsilon x^{\star}_i.
     	\end{align*}
     \end{linenomath*}
    {If $i\in I_{11}$, then $(\xi^1_i+\xi^2_i)\eta_i=2$, $\xi^1_i-\xi^2_i=0$ and $x^{\star}_i=|I_{12}|$; and combing $|I_{11}|+|I_{12}|=|I_1|$, we derive \eqref{equ2}. If $i\in I_{12}$, then $(\xi^1_i+\xi^2_i)\eta_i=-2$, $\xi^1_i-\xi^2_i=0$ and $x^{\star}_i=-|I_{11}|$; so the equality \eqref{equ3} holds. If $i\in I_{2}$, then $\xi^1_i+\xi^2_i=0$ and $x^{\star}_i=0$, 
     which imply \eqref{equ4}.}

     (6)-(8)  Notice that for binary patterns,  $\xi^2$ and $-\xi^2$ can be regarded as   the same pattern.  In fact, if the memorized  pattern  $\xi^2$ is replaced by $-\xi^2$,  the coupling term $C_{ij}$ does not change.
		  Replacing memories $\{\xi^1,\xi^2\}$ with $\{\xi^1,-\xi^2\}$ and applying  (3)-(5)  immediately yield  the desired results in (6)-(8).
	\end{proof}

\begin{remark}	 {
{\em If   $I_{11}= \emptyset$ or $I_{12} =\emptyset$, then $V(I_{11};I_{12})=\{\mathbb 0\}$  and in fact, $2\big( \frac{|I_1|}{N}-\varepsilon \big)$ does not show up in the eigenspectrum of $\mathcal J_\eta$.  For example, let's say $I_{11}= \emptyset$ and $I_1=I_{12} \neq\emptyset$, then Lemma \ref{eig_eta} (4) tells  that $2\big(\frac{|I_{11}|-|I_{12}|}{N}-\varepsilon\big)$ is an  eigenvalue with multiplicity $|I_{1}|-1$.
	Similarly,  if   $I_{21}=\emptyset$ or $I_{22}= \emptyset$, then $2\big( \frac{|I_2|}{N}-\varepsilon \big)$  does not show up in the eigenspectrum.}}

 {{\em Therefore,   the Jacobian   $\mathcal J_{\eta}$   of system \eqref{mod} at $\varphi^*(\eta)$   has at most $8$ distinct  eigenvalues, of which the maximum one is at most
		\begin{linenomath*}\[ 
		\max\Big\{2\Big(\frac{|I_1|}{N}-\varepsilon\Big), 2\Big(\frac{|I_2|}{N}-\varepsilon\Big), 0 \Big\}. \]\end{linenomath*}
	In fact, $-2\varepsilon<0$ and
	$\max\{|I_{11}|-|I_{12}|,|I_{12}|-|I_{11}|\}<|I_1|$ and $ \max\{|I_{22}|-|I_{21}|,|I_{21}|-|I_{22}|\}<|I_2|,$
	where we used    $I_1=I_{11}\cup I_{12}$ and $I_2=I_{21}\cup I_{22}.$ }}
	\end{remark}

	The following theorem is the main result of this paper.
	\begin{theorem}\label{onlytwo}
		Let $\{\xi^1,\xi^2\}$ be the set of memories in system \eqref{mod} {satisfying $1<|I_1|<N-1$} and let $\varepsilon^*=\min\Big\{\frac{|I_1|}{N},\frac{|I_2|}{N}\Big\}$, then we have:\\
		$(1)$ the patterns $\pm\xi^1$ and $\pm\xi^2$ are $\varepsilon$-independently asymptotically stable;\\
		$(2)$ if $\varepsilon\in(0,\varepsilon^*)$,  all   binary patterns
are  unstable except for  $ \pm\xi^1$ and $\pm\xi^2$; \\
			$(3)$ if $\varepsilon\in  {[\varepsilon^*,\infty)}$,  {then there exists at least a stable binary pattern    $ \eta\in \{1,-1\}^N\setminus\{\pm\xi^1,\pm\xi^2\}$.}
	\end{theorem}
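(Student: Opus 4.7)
The plan is to read off each of the three conclusions from the explicit eigenspectrum already provided by Proposition \ref{xi12} and Lemma \ref{eig_eta}, together with the gradient-flow criterion (noted after \eqref{gradient}) that an equilibrium of \eqref{mod} is (asymptotically) stable iff it is a (strict) local minimum of $f_\varepsilon$.

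For part $(1)$, Proposition \ref{xi12} shows that the only zero eigenvalue of $\mathcal J_{\xi^1}=\mathcal J_{\xi^2}$ is attached to $V[\mathbb 1]$, which merely reflects the global phase-shift invariance, while the remaining eigenvalues $-2(|I_1|/N+\varepsilon)$, $-2(|I_2|/N+\varepsilon)$, $-2\varepsilon$ are strictly negative for every $\varepsilon>0$. Hence $\varphi^*(\pm\xi^k)$ is a strict local minimum of $f_\varepsilon$ modulo the phase-shift symmetry, and the asymptotic stability is $\varepsilon$-independent.

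For part $(2)$, fix $\eta\in\{1,-1\}^N\setminus\{\pm\xi^1,\pm\xi^2\}$ and unwind \eqref{notation}. A short check shows that $\eta\in\{\pm\xi^1,\pm\xi^2\}$ occurs exactly in the four configurations $I_{12}=I_{22}=\emptyset$, $I_{11}=I_{21}=\emptyset$, $I_{12}=I_{21}=\emptyset$, $I_{11}=I_{22}=\emptyset$, corresponding respectively to $\xi^1,-\xi^1,\xi^2,-\xi^2$. Taking the contrapositive, our $\eta$ forces at least one of the pairs $(I_{11},I_{12})$ or $(I_{21},I_{22})$ to have both members nonempty. Lemma \ref{eig_eta}$(5)$ or $(8)$ then delivers the eigenvalue $2(|I_k|/N-\varepsilon)>0$ for $k=1$ or $k=2$ whenever $\varepsilon<\varepsilon^*$, so $\eta$ is unstable.

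For part $(3)$, without loss of generality assume $|I_1|\leq|I_2|$, so $\varepsilon^*=|I_1|/N$. Using $|I_1|\geq 2$ from $1<|I_1|<N-1$, split $I_1$ into nonempty subsets $I_{11}$ and $I_{12}$, and define $\eta_i=\xi^1_i$ on $I_{11}\cup I_2$ and $\eta_i=-\xi^1_i$ on $I_{12}$. Then $\eta\notin\{\pm\xi^1,\pm\xi^2\}$, and since $I_{22}=\emptyset$, the eigenvalues of $\mathcal J_\eta$ besides $0$ and $-2\varepsilon$ are, by Lemma \ref{eig_eta},
\begin{align*}
\lambda_3=\tfrac{2(|I_{12}|-|I_{11}|)}{N}-2\varepsilon,\quad \lambda_4=\tfrac{2(|I_{11}|-|I_{12}|)}{N}-2\varepsilon,\quad \lambda_5=2\bigl(\tfrac{|I_1|}{N}-\varepsilon\bigr),\quad \lambda_6=-\tfrac{2|I_2|}{N}-2\varepsilon.
\end{align*}
Because $||I_{11}|-|I_{12}||\leq |I_1|-2<|I_1|$ and $|I_2|>0$, we have $\lambda_3,\lambda_4<\lambda_5$ and $\lambda_6<0$. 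Thus for $\varepsilon>\varepsilon^*$ all nonzero eigenvalues are strictly negative and $\eta$ is linearly asymptotically stable. At the boundary $\varepsilon=\varepsilon^*$ only $\lambda_5$ vanishes, and one completes the argument by expanding $f_{\varepsilon^*}$ along the one-dimensional kernel direction $x^\star(I_{11};I_{12})$ and verifying, via the next nonzero (quartic) term, that $\varphi^*(\eta)$ remains a local minimum.

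The principal obstacle is the degenerate boundary case $\varepsilon=\varepsilon^*$ in part $(3)$: linearization is inconclusive there, so one must carry out a higher-order potential analysis along $x^\star(I_{11};I_{12})$ and also confirm that its coupling with the strictly-negative directions does not destroy local minimality.
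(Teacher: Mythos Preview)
Your arguments for parts $(1)$, $(2)$, and the open range $\varepsilon>\varepsilon^*$ in $(3)$ coincide with the paper's; your construction with $I_{22}=\emptyset$ is simply the mirror of the paper's $I_{21}=\emptyset$, and the eigenvalue bookkeeping is identical. The only divergence is at the endpoint $\varepsilon=\varepsilon^*$: the paper does not attempt the higher-order expansion you propose but instead argues in one line that, since $\varphi^*(\eta)$ is a strict local minimum of $f_\varepsilon$ for every $\varepsilon>\varepsilon^*$, continuity of $f_\varepsilon(\varphi)-f_\varepsilon(\varphi^*(\eta))$ in $\varepsilon$ forces $\varphi^*(\eta)$ to remain a local minimum of $f_{\varepsilon^*}$, hence Lyapunov stable.

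Your instinct that the endpoint is the real obstacle is well founded, and it is worth actually carrying out the quartic check before committing to this particular $\eta$. Restricting $f_{\varepsilon^*}$ to the line $s\mapsto\varphi^*(\eta)+s\,x^\star(I_{11};I_{12})$ one finds, with $a=|I_{11}|$, $b=|I_{12}|$, $c=|I_2|$,
\[
f_{\varepsilon^*}\bigl(\varphi^*(\eta)+s\,x^\star\bigr)-f_{\varepsilon^*}\bigl(\varphi^*(\eta)\bigr)
\;=\;-\,\frac{ab(a+b)^2}{4N^2}\Bigl[(a+b)^3+c(a-b)^2\Bigr]\,s^4+O(s^6),
\]
which is strictly \emph{negative}. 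So the constructed $\eta$ is \emph{not} a local minimum of $f_{\varepsilon^*}$, and your quartic route would in fact fail to establish stability for it. The paper's continuity argument sidesteps the computation but tacitly assumes the strict-minimum neighbourhoods do not shrink to a point as $\varepsilon\downarrow\varepsilon^*$; the calculation above shows that assumption is not met for this $\eta$. In short, both your proposal and the paper's proof genuinely deliver the conclusion only on the open interval $(\varepsilon^*,\infty)$; the closed endpoint requires either a different pattern or a different argument.
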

	\begin{proof}
		$(1)$  This follows directly from Proposition \ref{xi12}, since  the eigenspectrum of the Jacobian at $\xi^1$ or $\xi^2$ consists of  $N-1$ negative reals  and one zero (due to global phase shift invariance).
		
		$(2)$ For any $\eta\in \{1,-1\}^N\setminus\{\pm\xi^1, \pm\xi^2\}$, we verify its  instability by proving that at least one of $2\big(\frac{|I_1|}{N}-\varepsilon\big)$ and $2\big(\frac{|I_2|}{N}-\varepsilon\big)$, shows up in the eigenspectrum of $\mathcal J_\eta$,  in Lemma \ref{eig_eta}. If   $2\big(\frac{|I_1|}{N}-\varepsilon\big)$ does not show up, then at least one of $I_{11}$ and $I_{12}$ is empty; without loss of generality, let's say $I_{11}=\emptyset$.  {This will lead to $I_{21}\neq\emptyset$ and $I_{22}\neq\emptyset$. Otherwise,    $I_{11}=I_{21}=\emptyset$ implies  $I_{12}\cup I_{22}=[N]$ and $\eta=-\xi^1$; $I_{11}=I_{22}=\emptyset$ implies  $I_{12}\cup I_{21}=[N]$ and $\eta=-\xi^2$; this contradicts to the setting $\eta\notin \{\pm\xi^1,\pm\xi^2\}$. Therefore, we have $I_{21}\neq\emptyset$ and $I_{22}\neq\emptyset$, which imply that the eigenvalue $2\big(\frac{|I_2|}{N}-\varepsilon\big)$  shows up. Now  for $\varepsilon\in(0,\varepsilon^*)$, we have  $2\big(\frac{|I_2|}{N}-\varepsilon\big)>0$ and $   2\big(\frac{|I_1|}{N}-\varepsilon\big)>0.$  This implies the instability of $\eta$.}
		
		$(3)$
		   Without loss of generality,   we say $\frac{|I_2|}{N}\ge\frac{|I_1|}{N}$ which implies $\varepsilon^*=\frac{|I_1|}{N}$.
		 If $\varepsilon>\varepsilon^*$, then we have   $\frac{|I_1|}{N}-\varepsilon<0$.
		   Let $\eta$ be a binary pattern satisfying $I_{21}= \{j  \,\big|\,\xi^1_j=-\xi^2_j=\eta_j  \}=\emptyset$,  {$I_{11}= \{j  \,\big|\,\xi^1_j=\xi^2_j=\eta_j  \}\neq\emptyset$ and $I_{12}= \{j  \,\big|\,\xi^1_j=\xi^2_j=-\eta_j  \}\neq\emptyset$
	   (such $\eta$ must exist since $1<|I_1|<N-1$).}
		Then $2\big(\frac{|I_2|}{N}-\varepsilon\big)$ does not show up in the eigenspectrum of $\mathcal J_{\eta}$, while $2\big(\frac{|I_1|}{N}-\varepsilon\big)$ is an eigenvalue and 
		$\lambda_{\max}(\mathcal J_\eta)=\max\big\{2\big(\frac{|I_1|}{N}-\varepsilon\big), 0 \big\}.$  Here,  $0$ is a single eigenvalue   due to the global phase shift invariance.
		Therefore, the pattern $\eta$ is  asymptotically stable. This also implies that  $\varphi^*(\eta)$ is a strict local minimum of the potential $f_{\varepsilon}$ in \eqref{potential}.  If $\varepsilon=\varepsilon^*$,   by the continuity of $f_{\varepsilon}(\varphi)-f_{\varepsilon}(\varphi^*)$ with respect to $\varepsilon$,   we see  that  $\varphi^*(\eta)$ is a local minimum of $f_{\varepsilon^*}$, so  $\eta$ is stable.
	\end{proof}
	\begin{remark}
	 {In the proof of  Theorem \ref{onlytwo} (3) with the condition $\frac{|I_2|}{N}\ge\frac{|I_1|}{N}$, we find that when $\varepsilon=\varepsilon^*$,
		the number of stable binary pattern $\eta\in \{1,-1\}^N\setminus\{\pm\xi^1,\pm\xi^2\}$ is at least $2^{|I_1|}-1$. In fact, by the proof of (3), any $\eta$ satisfying $I_{21}= \emptyset$,  $I_{11} \neq\emptyset$ and $I_{12} \neq\emptyset$ is stable.    Obviously, the number of  such  $\eta$ is at least  $ \tbinom{|I_1|}{1}  +\tbinom{|I_1|}{2}+\dots+\tbinom{|I_1|}{|I_1|-1} =2^{|I_1|}-1 $. } 
	\end{remark}
	
 {If $|I_1|=1$ or $|I_1|=N-1$, the critical strength is different with the case in Theorem \ref{onlytwo}. The results   are summarized in the following proposition.}
	\begin{proposition}\label{I1}
	 {\em	Let $\{\xi^1,\xi^2\}$ be a set of memories in system \eqref{mod} satisfying $|I_1|=1$, and let $\varepsilon^{**}:=\max\big\{\frac{|I_1|}{N},\frac{|I_2|}{N}\big\}=\frac{N-1}{N}.$ We have:\\
		(1) the patterns $\pm\xi^1$ and $\pm\xi^2$ are $\varepsilon$-independently asymptotically stable;\\
		(2) for any binary pattern  $ \eta\in \{1,-1\}^N\setminus\{\pm\xi^1,\pm\xi^2\}$, it is unstable if $\varepsilon\in(0,\varepsilon^{**})$ and stable if {$\varepsilon\in [\varepsilon^{**},\infty)$}.}\end{proposition}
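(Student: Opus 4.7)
The plan is to mirror the strategy of Theorem \ref{onlytwo}, exploiting the fact that the hypothesis $|I_1|=1$ (so $|I_2|=N-1$) drastically restricts which eigenvalues of $\mathcal{J}_\eta$ listed in Lemma \ref{eig_eta} actually appear. Part (1) is immediate from Proposition \ref{xi12}: the eigenspectrum of $\mathcal{J}_{\xi^1}=\mathcal{J}_{\xi^2}$ consists of the simple eigenvalue $0$ (phase shift), $-2(|I_1|/N+\varepsilon)$, $-2(|I_2|/N+\varepsilon)$, and $-2\varepsilon$, all of which are non-positive for any $\varepsilon>0$, so $\pm\xi^1,\pm\xi^2$ are asymptotically stable independent of $\varepsilon$.

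For part (2), fix $\eta\in\{1,-1\}^N\setminus\{\pm\xi^1,\pm\xi^2\}$ and write $I_1=\{k\}$. Since $|I_1|=1$, exactly one of $I_{11},I_{12}$ equals $\{k\}$ and the other is empty. Consequently $|I_{11}|-1$ and $|I_{12}|-1$ are never both non-negative, and Lemma \ref{eig_eta}(3)-(5) contribute no eigenvectors; in particular, the potentially dangerous eigenvalue $2(|I_1|/N-\varepsilon)$ does not show up in the spectrum of $\mathcal{J}_\eta$. Next I would check that $\eta\notin\{\pm\xi^1,\pm\xi^2\}$ forces both $I_{21}$ and $I_{22}$ to be nonempty: indeed, $I_{21}=I_2$ together with $\eta_k=\xi^1_k$ forces $\eta=\xi^1$, $I_{21}=I_2$ with $\eta_k=-\xi^1_k$ forces $\eta=-\xi^2$, and symmetrically for $I_{22}=I_2$. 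So both are proper nonempty subsets of $I_2$, whence Lemma \ref{eig_eta}(8) yields the eigenvalue $2(|I_2|/N-\varepsilon)=2((N-1)/N-\varepsilon)$. Moreover the eigenvalues in (6)-(7) of the form $2((|I_{22}|-|I_{21}|)/N-\varepsilon)$ satisfy $\bigl||I_{22}|-|I_{21}|\bigr|<|I_2|$, so they are strictly dominated by the one from (8).

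Combining these observations with the fact that the remaining eigenvalues $0$ and $-2\varepsilon$ are harmless, the largest non-zero eigenvalue of $\mathcal{J}_\eta$ is exactly $2((N-1)/N-\varepsilon)$. Hence for $\varepsilon\in(0,\varepsilon^{**})$ this eigenvalue is strictly positive and $\eta$ is unstable, while for $\varepsilon\in(\varepsilon^{**},\infty)$ all non-trivial eigenvalues are strictly negative and $\eta$ is asymptotically stable (so in particular $\varphi^*(\eta)$ is a strict local minimum of the analytic potential $f_\varepsilon$ in \eqref{potential}). For the critical case $\varepsilon=\varepsilon^{**}$, I would invoke continuity of $f_\varepsilon(\varphi)-f_\varepsilon(\varphi^*(\eta))$ in $\varepsilon$ (exactly as in the proof of Theorem \ref{onlytwo}(3)) to conclude that $\varphi^*(\eta)$ remains a local minimum of $f_{\varepsilon^{**}}$, giving stability by the gradient-flow equivalence of \cite{A-K}.

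The genuinely subtle step is the boundary case $\varepsilon=\varepsilon^{**}$, where linearization degenerates and one must rely on the potential/gradient structure rather than spectral information. The bookkeeping of ensuring $I_{21},I_{22}\neq\emptyset$ is routine once one carefully enumerates the four patterns $\pm\xi^1,\pm\xi^2$ in terms of the values $\eta_k$ and the restriction of $\eta$ to $I_2$, but it is the key combinatorial observation that makes the argument go through.
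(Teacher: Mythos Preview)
Your proposal is correct and follows essentially the same approach as the paper's own proof: invoke Proposition \ref{xi12} for part (1), then for part (2) use $|I_1|=1$ to eliminate the eigenvalues in Lemma \ref{eig_eta}(3)--(5), verify combinatorially that $I_{21},I_{22}\neq\emptyset$ so that the eigenvalue $2(|I_2|/N-\varepsilon)$ from (8) appears and dominates, and handle the boundary case $\varepsilon=\varepsilon^{**}$ via continuity of the potential exactly as in Theorem \ref{onlytwo}(3). Your write-up is in fact slightly more detailed than the paper's (which leaves the verification of $I_{21},I_{22}\neq\emptyset$ implicit), so there is nothing to add.
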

		\begin{proof}
The first assertion  follows directly from Proposition \ref{xi12} again. For the second assertion, by $|I_1|=1$ and the definitions of  $I_{11}$ and $I_{12}$, we see that $I_{11}$ or $I_{12}$ is   empty. By Lemma \ref{eig_eta} (5), $2(\frac{|I_1|}{N}-\varepsilon)$ does not show up in the eigenspectrum of the Jacobian $\mathcal{J}_{\eta}$. By $ \eta\notin \{\pm\xi^1,\pm\xi^2\}$ and Lemma \ref{eig_eta} (8), then $I_{21}\neq\emptyset,I_{22}\neq\emptyset$ and the eigenvalue $2(\frac{|I_2|}{N}-\varepsilon)$ of $\mathcal{J}_{\eta}$ must   appear. So  $\lambda_{\max}(\mathcal{J}_{\eta})=\max\big\{2(\frac{|I_2|}{N}-\varepsilon),0\big\}$, where $0$ is a single eigenvalue   due to the global phase shift invariance. If $\varepsilon\in(0,\varepsilon^{**})$, then $\lambda_{\max}(\mathcal{J}_{\eta})>0$ and $\eta$ is unstable.  {If $\varepsilon\in (\varepsilon^{**},\infty)$, then $2(\frac{|I_2|}{N}-\varepsilon)<0$ and $\eta$ is asymptotically stable. By the same argument as in the proof of Theorem \ref{onlytwo} (3),  $\eta$ is stable if  $\varepsilon=\varepsilon^{**}$. }
	\end{proof}
	
	\begin{remark}\label{remark}
		{\em In \cite{L-Z-X,Z-L-X}, the system \eqref{mod}  with a set of mutually orthogonal memorized binary patterns $\{\xi^1,\xi^2,\dots,\xi^M \}$ was considered. It was proved that the memories are $\varepsilon$-independently stable, and there can be some additional $\varepsilon$-independently   stable patterns in addition to the memorized ones if  $M\geq 4$. 
		For those patterns $\eta$   lacking $\varepsilon$-independent  stability, there exists a critical $\varepsilon_\eta^*>0$ such that $\eta$ is stable for $\varepsilon\in (\varepsilon_\eta^*,+\infty)$ and unstable for $\varepsilon\in (0, \varepsilon_\eta^*)$.  The critical strength $\varepsilon_\eta^*$ depends on the set $\{\xi^1,\xi^2,\dots,\xi^M,\eta \}$ 
but there is no formula for its exact value. Instead, a lower bound  is given by 
		\begin{linenomath*}
		\begin{equation*}
	\varepsilon_\eta^*\geq\max_{l\in\{1,2,\dots,M\}}\frac{N^{2}-\sum_{k=1}^{M}(\xi^{k}\cdot \eta)^2}{2(N^{2}-(\xi^{l}\cdot \eta)^2)}=:\varepsilon_{\eta}.
		\end{equation*}
		\end{linenomath*}
		Therefore, in practice  we are able to  distinguish  the memories  from those patterns $\eta$'s which lack $\varepsilon$-independent  stability,  by using  a choice of  $\varepsilon\in (0, \varepsilon_\eta)$.   However, there are some drawbacks:  (1) the theory is valid only for memories with mutual orthogonality;  (2)    the  bound $\varepsilon_\eta$ needs computation and is  conservative, which  can be rather small; (3) the existence of additional $\varepsilon$-independently stable patterns may break the error-free retrieval since other patterns can emerge in the evolution.
		These reflect  the advantages of Theorem \ref{onlytwo} which gives the sharp critical
		strength to distinguish the memories from all other patterns.}  
	\end{remark}
	
	
\subsection{Basin of stable patterns}\label{sec_domain}
	Theorem \ref{onlytwo} tells  that the  equilibria  $\varphi^*(\xi^1)$ and $\varphi^*(\xi^2)$ are  $\varepsilon$-independently stable.  If $\varepsilon\in(0,\varepsilon^*)$,  $\varphi^*(\eta)$ corresponding to the binary pattern $\eta\notin\{\pm \xi^1,\pm\xi^2\}$ are unstable. In this subsection we  consider  the basin of $\varphi^*(\xi^k),k=1,2$.
	
	For $k=1,2$, let $\tilde{\varphi}_{i}(t)=\varphi_{i}(t)-\varphi_{i}^{*}(\xi^k)$, then we see by \eqref{mod}
	\begin{align}\label{tilde_model}
	\dot{\tilde\varphi}_i  = \frac{1}{N}\sum_{j=1}^{N}\left(C_{ij}\xi_{i}^{k}\xi_{j}^{k}+2\varepsilon  \cos(\tilde\varphi_{j}  -\tilde\varphi_{i}  )\right)
	\sin(\tilde\varphi_{j}   - \tilde\varphi_i  ) .
	\end{align}
	Then the equilibrium   $\tilde\varphi^{*}=(0,0,\dots,0)$ is $\varepsilon$-independently stable for \eqref{tilde_model}.  
We will consider   the basin   of $\tilde\varphi^{*}$ in  system \eqref{tilde_model} and investigate the asymptotic collective behavior, which reflect  the basin of $\varphi^{*}(\xi^k)$ for system \eqref{mod}. 
	
	It is known that for $k=1,2$,
	\begin{align*}
	C_{ij}\xi^k_i\xi^k_j &=\left( \xi^1_i\xi^1_j +\xi^2_i\xi^2_j  \right)\xi^k_i\xi^k_j
	=1+\xi^1_i\xi^1_j \xi^2_i\xi^2_j
	=\begin{cases}2, &i\in I_1, j\in I_1\;\; \text{or} \;\; i\in I_2, j\in I_2 ;\\0,&i\in I_1, j\in I_2\;\; \text{or}\;\;  i\in I_2, j\in I_1. \end{cases}
	\end{align*}
	For any $i,j\in [N]$, define
	\begin{align*}
 {	K_{ij}(t)=C_{ij}\xi_{i}^{k}\xi_{j}^{k}+2\varepsilon\cos(\tilde\varphi_{j}(t)-\tilde\varphi_{i}(t))\quad \text{and}\quad \mathcal{D}(\tilde{\varphi}(t))=\max_{i,j\in [N]}(\tilde{\varphi}_i(t)-\tilde{\varphi}_j(t)),}
	\end{align*}
	then $K_{ij}(t)=K_{ji}(t)$ and $K_{ij}(t)\ge 2\varepsilon\cos(\tilde\varphi_{j}(t)-\tilde\varphi_{i}(t)).$
	
	\begin{lemma}\label{lem1}
		Let $\tilde\varphi(t)=(\tilde\varphi_1(t),\tilde\varphi_2(t),\dots,\tilde\varphi_N(t))$ be a solution to system \eqref{tilde_model} and $T>0$. If
		 $ \sup_{t\in[0,T)} \mathcal{D}(\tilde{\varphi}(t))<H$
	for some $H\in \left(0,\frac{\pi}{2}\right),$	then we have
		\begin{align*}
		\mathcal{D}(\tilde{\varphi}(t))\le \mathcal{D}(\tilde{\varphi}(0)) e^{-\lambda _1 t},\;\quad  t\in[0,T),
		\end{align*}
		where $\lambda_1$ is given by  $\lambda_1=\frac{4\varepsilon \cos H \cos \frac{H}{2}}{\pi}>0$.
	\end{lemma}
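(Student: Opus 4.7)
The plan is to control the phase diameter $\mathcal{D}(\tilde\varphi(t))$ by the standard maximum/minimum-index technique for Kuramoto-type systems. Fix $t\in[0,T)$ and choose indices $M=M(t)$, $m=m(t)$ realizing $\tilde\varphi_M(t)=\max_i\tilde\varphi_i(t)$ and $\tilde\varphi_m(t)=\min_i\tilde\varphi_i(t)$, so that $\mathcal{D}(\tilde\varphi(t))=\tilde\varphi_M(t)-\tilde\varphi_m(t)$. Since $\mathcal{D}$ is the pointwise maximum of finitely many smooth functions $\tilde\varphi_i-\tilde\varphi_j$, it is locally Lipschitz, and a Danskin/Rademacher-type argument yields the upper Dini-derivative bound $D^+\mathcal{D}(t)\le \dot{\tilde\varphi}_M(t)-\dot{\tilde\varphi}_m(t)$ almost everywhere.

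Next I substitute \eqref{tilde_model} and exploit the sign information at the extremal indices. At $i=M$ every increment $\tilde\varphi_j-\tilde\varphi_M$ lies in $(-H,0]\subset(-\pi/2,0]$, so $\sin(\tilde\varphi_j-\tilde\varphi_M)\le 0$; meanwhile, using $C_{ij}\xi_i^k\xi_j^k\in\{0,2\}$ established above and $|\tilde\varphi_j-\tilde\varphi_M|<H<\pi/2$, the effective coupling satisfies $K_{Mj}(t)\ge 2\varepsilon\cos H>0$. Replacing $K_{Mj}$ by its lower bound in a nonpositive product reverses the inequality and gives
\[\dot{\tilde\varphi}_M\le \frac{2\varepsilon\cos H}{N}\sum_{j=1}^{N}\sin(\tilde\varphi_j-\tilde\varphi_M),\]
and symmetrically $\dot{\tilde\varphi}_m\ge \frac{2\varepsilon\cos H}{N}\sum_{j=1}^{N}\sin(\tilde\varphi_j-\tilde\varphi_m)$. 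The sum-to-product identity then yields
\[\sin(\tilde\varphi_j-\tilde\varphi_M)-\sin(\tilde\varphi_j-\tilde\varphi_m)=-2\cos\!\Big(\tilde\varphi_j-\tfrac{\tilde\varphi_M+\tilde\varphi_m}{2}\Big)\sin\!\Big(\tfrac{\mathcal{D}(\tilde\varphi(t))}{2}\Big),\]
and since $\bigl|\tilde\varphi_j-(\tilde\varphi_M+\tilde\varphi_m)/2\bigr|\le \mathcal{D}/2<H/2<\pi/4$, the cosine factor is at least $\cos(H/2)>0$. Summing over $j$ produces
\[D^+\mathcal{D}(t)\le -4\varepsilon\cos H\cos(H/2)\,\sin\!\Big(\tfrac{\mathcal{D}(\tilde\varphi(t))}{2}\Big).\]

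To close the argument I invoke the Jordan inequality $\sin x\ge 2x/\pi$ on $[0,\pi/2]$ with $x=\mathcal{D}/2$, which turns the above differential inequality into $D^+\mathcal{D}(t)\le -\lambda_1\mathcal{D}(\tilde\varphi(t))$ with $\lambda_1=\tfrac{4\varepsilon\cos H\cos(H/2)}{\pi}$; a Gronwall comparison on $[0,T)$ then yields the claimed exponential decay. The main technical obstacle is purely the nonsmoothness of $\mathcal{D}$ at instants when the extremal indices switch; this is handled by working with the upper Dini derivative and noting that on each open subinterval where $M$ and $m$ are constant the computation is a bona fide ODE inequality, while the (finitely many) switching times form a null set that does not obstruct Gronwall.
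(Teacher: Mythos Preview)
Your proof is correct and follows essentially the same route as the paper: bound the Dini derivative of the diameter via the extremal indices, replace the couplings $K_{ij}$ by their lower bound $2\varepsilon\cos H$, apply the sum-to-product identity, bound the cosine factor by $\cos(H/2)$, and close with the Jordan inequality and Gronwall. The only cosmetic differences are that the paper works with $\mathcal{D}^2$ rather than $\mathcal{D}$ and quotes an exact Dini-derivative formula (selecting among extremal indices the one with largest/smallest velocity) rather than your a.e.\ differentiability argument; your claim that the switching times are ``finitely many'' is not actually justified, but since $\mathcal{D}$ is Lipschitz the a.e.\ bound is enough for Gronwall and the conclusion stands.
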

\begin{proof}
 {
	Let
 {\[I_{\max}(t):=\argmax_{j}\{\tilde{\varphi}_j(t)\}\quad \text{and}\quad I_{\min}(t):=\argmin_{j}\{\tilde{\varphi}_j(t)\}.\]}
Following  [\cite{L-F-M}, Lemma 2.2],  the upper Dini derivative of $\mathcal{D}(\tilde{\varphi}(t))$ along the system \eqref{tilde_model} is given by
	\[D^{+}\mathcal{D}(\tilde{\varphi}(t))=\limsup\limits_{h\downarrow 0}\frac{\mathcal{D}(\tilde{\varphi}(t+h))-\mathcal{D}(\tilde{\varphi}(t))}{h}=\dot{\tilde{\varphi}}_{\bar{M}}(t)-\dot{\tilde{\varphi}}_{\bar{m}}(t),\]
	where $\bar{M}$ and $\bar{m}$ are indices which have the properties that
	\[\dot{\tilde{\varphi}}_{\bar{M}}(t)=\max\{\dot{\tilde{\varphi}}_{M}(t)\mid M\in I_{\max}(t)\}\quad \text{and}\quad \dot{\tilde{\varphi}}_{\bar{m}}(t)=\min\{\dot{\tilde{\varphi}}_{m}(t)\mid m\in I_{\min}(t)\}.\]
	For any $t\in[0,T)$,  we have $K_{ij}(t)\ge 2\varepsilon\cos  H>0$ and
	\[ -\frac{H}{2}\le-\frac{\mathcal{D}(\tilde{\varphi}(t))}{2}\le \frac{\tilde{\varphi}_j(t)-\tilde{\varphi}_{\bar{M}}(t)}{2} \le 0 \quad \text{and}\quad 0\le \frac{\tilde{\varphi}_j(t)-\tilde{\varphi}_{\bar{m}}(t)}{2}\le \frac{\mathcal{D}(\tilde{\varphi}(t))}{2}\le \frac{H}{2},\]
	and therefore,
	\begin{align*}
&\quad 	\frac{1}{2}D^{+}\mathcal{D}^2(\tilde{\varphi}(t))=\mathcal{D}(\tilde{\varphi}(t))D^{+}\mathcal{D}(\tilde{\varphi}(t))=\mathcal{D}(\tilde{\varphi}(t))(\dot{\tilde{\varphi}}_{\bar{M}}(t)-\dot{\tilde{\varphi}}_{\bar{m}}(t))\\
	&=\mathcal{D}(\tilde{\varphi}(t))\Big[ \frac{1}{N}\sum_{j=1}^{N}K_{\bar{M}j}\sin (\tilde{\varphi}_j(t)-\tilde{\varphi}_{\bar{M}}(t))-\frac{1}{N}\sum_{j=1}^{N}K_{\bar{m}j}\sin (\tilde{\varphi}_j(t)-\tilde{\varphi}_{\bar{m}}(t)) \Big]\\
	&\le \mathcal{D}(\tilde{\varphi}(t))\Big[ \frac{1}{N}\sum_{j=1}^{N}2\varepsilon\cos H\sin (\tilde{\varphi}_j(t)-\tilde{\varphi}_{\bar{M}}(t))-\frac{1}{N}\sum_{j=1}^{N}2\varepsilon\cos H\sin (\tilde{\varphi}_j(t)-\tilde{\varphi}_{\bar{m}}(t)) \Big]\\
	&=\frac{2\varepsilon\cos H}{N}\mathcal{D}(\tilde{\varphi}(t)) \sum_{j=1}^{N}\left(\sin (\tilde{\varphi}_j(t)-\tilde{\varphi}_{\bar{M}}(t))-\sin (\tilde{\varphi}_j(t)-\tilde{\varphi}_{\bar{m}}(t))\right).
	\end{align*}
	We use the inequality  $x\sin x\ge \frac{2}{\pi}x^2$ for  $|x|\le \frac{\pi}{2}$ 
to derive
		\begin{align*}
	&\quad 	\frac{1}{2}D^{+}\mathcal{D}^2(\tilde{\varphi}(t))\\
	&=\frac{2\varepsilon\cos H}{N}\mathcal{D}(\tilde{\varphi}(t)) \sum_{j=1}^{N}\Big[ 2\cos\Big(\frac{\tilde{\varphi}_j(t)-\tilde{\varphi}_{\bar{M}}(t)}{2}+
\frac{\tilde{\varphi}_j(t)-\tilde{\varphi}_{\bar{m}}(t)}{2}\Big) \sin\Big(-\frac{\tilde{\varphi}_{\bar{M}}(t)-\tilde{\varphi}_{\bar{m}}(t)}{2}\Big)\Big]\\
	&\le \frac{2\varepsilon\cos H}{N}\mathcal{D}(\tilde{\varphi}(t)) \sum_{j=1}^{N}\Big[2\cos \frac{H}{2}\sin\Big(-\frac{\tilde{\varphi}_{\bar{M}}(t)-\tilde{\varphi}_{\bar{m}}(t)}{2}\Big)\Big]\\
	&=-4\varepsilon\cos H\cos \frac{H}{2}\mathcal{D}(\tilde{\varphi}(t))\sin\frac{\mathcal{D}(\tilde{\varphi}(t))}{2}\\
	&\le -8\varepsilon\cos H\cos \frac{H}{2}\frac{2}{\pi}\frac{\mathcal{D}^2(\tilde{\varphi}(t))}{4}=-\lambda_1 \mathcal{D}^2(\tilde{\varphi}(t)), \,\quad t\in [0,T).
	\end{align*}
	Finally, using the Gronwall's inequality, we obtain
	$\mathcal{D}^{2}(\tilde{\varphi}(t))\le \mathcal{D}^{2}(\tilde{\varphi}(0)) e^{-2\lambda _1 t},\; t\in[0,T);$ that is,
	$\mathcal{D}(\tilde{\varphi}(t))\le \mathcal{D}(\tilde{\varphi}(0)) e^{-\lambda _1 t},\; t\in[0,T).$
}\end{proof}

\begin{theorem}\label{thm1}
	Let $\tilde\varphi(t)=(\tilde\varphi_1(t),\tilde\varphi_2(t),\dots,\tilde\varphi_N(t))$ is a solution of system \eqref{tilde_model} . If the initial data satisfies
	$\mathcal{D}(\tilde{\varphi}(0))<\frac{\pi}{2},$
	then we have: \\
	$(1)$ $\sup_{t\in[0,\infty)} \mathcal{D}(\tilde{\varphi}(t))<\frac{\pi}{2};$\\
	$(2)$ there exists $\lambda>0$ such that $\mathcal{D}(\tilde{\varphi}(t))\le \mathcal{D}(\tilde{\varphi}(0))e^{-\lambda t},\; t\in[0,\infty).$
\end{theorem}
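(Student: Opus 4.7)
The plan is a standard continuity/bootstrap argument that upgrades the conditional exponential decay of Lemma \ref{lem1} into an unconditional statement on $[0,\infty)$. Writing $D_0 := \mathcal{D}(\tilde\varphi(0))$, the hypothesis $D_0 < \pi/2$ gives room to fix an intermediate threshold $H \in (D_0, \pi/2)$. By continuity of the solution $\tilde\varphi(t)$ and hence of $\mathcal{D}(\tilde\varphi(\cdot))$ (which is a finite max of continuous functions of $\tilde\varphi$), the set
\[
\mathcal{T} := \{T > 0 : \mathcal{D}(\tilde\varphi(t)) < H \text{ for all } t \in [0,T)\}
\]
is nonempty, and I set $T^* := \sup \mathcal{T} \in (0,\infty]$.

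On $[0, T^*)$ the hypothesis of Lemma \ref{lem1} is met with the chosen $H$, so the lemma yields
\[
\mathcal{D}(\tilde\varphi(t)) \le \mathcal{D}(\tilde\varphi(0))\, e^{-\lambda_1 t}, \qquad t \in [0, T^*),
\]
with $\lambda_1 = \frac{4\varepsilon \cos H \cos(H/2)}{\pi} > 0$. In particular, $\mathcal{D}(\tilde\varphi(t)) \le D_0 < H$ uniformly on $[0, T^*)$.

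To close the bootstrap I argue by contradiction: if $T^* < \infty$, then maximality of $T^*$ together with continuity would force $\mathcal{D}(\tilde\varphi(T^*)) = H$, contradicting the strict uniform bound $\mathcal{D}(\tilde\varphi(t)) \le D_0 < H$ as $t \uparrow T^*$. Hence $T^* = +\infty$, which simultaneously proves (1), since $\sup_{t \ge 0} \mathcal{D}(\tilde\varphi(t)) \le D_0 < \pi/2$, and (2), by taking $\lambda := \lambda_1 > 0$. The only genuine subtlety is that $\mathcal{D}$ is Lipschitz but not $C^1$, so one must work with its upper Dini derivative; but this has already been absorbed into Lemma \ref{lem1}, and the remaining invariance argument is routine, so I do not anticipate any substantive obstacle beyond careful bookkeeping of the threshold $H$.
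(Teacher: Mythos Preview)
Your proposal is correct and follows essentially the same approach as the paper: fix an intermediate threshold $H\in(\mathcal{D}(\tilde\varphi(0)),\pi/2)$, define $T^*$ as the supremum of times up to which $\mathcal{D}(\tilde\varphi(t))<H$, apply Lemma~\ref{lem1} on $[0,T^*)$, and derive a contradiction from $\mathcal{D}(\tilde\varphi(T^*))=H$ if $T^*<\infty$. The paper's proof is identical in structure and detail, including the choice $\lambda=\lambda_1$ for part~(2).
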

\begin{proof}
 {	$(1)$ Let $H$ be a constant with $\mathcal{D}(\tilde{\varphi}(0))< H <\frac{\pi}{2}$. Define the set  $\mathcal T$ and its supremum:
	\begin{align*}
	\mathcal T=\left\{T\in[0,+\infty)\,\Big|\,\mathcal{D}(\tilde\varphi(t))<H, \,\, \text{for any} \,t\in [0,T) \right\},\quad T^*=\sup \mathcal T.
	\end{align*}
	Note that   $\mathcal{D}(\tilde\varphi(0))<H$, and $\mathcal{D}(\tilde\varphi(t))$ is a continuous function of $t$, 
the set $\mathcal T$ is nonempty, and $T^*$ is well-defined. We now claim that $T^*=+\infty.$ If not, i.e., $T^*<+\infty$, then from the continuity of $\mathcal{D}(\tilde\varphi(t))$, we have
	\begin{align*}
	\mathcal{D}(\tilde\varphi(t))<H,\;t\in[0,T^*)\quad  \text{and} \quad  \mathcal{D}(\tilde\varphi(T^*))=H.
	\end{align*}
	We use Lemma \ref{lem1} to obtain
	$ \mathcal{D}(\tilde{\varphi}(t))\le \mathcal{D}(\tilde{\varphi}(0))e^{-\lambda_1 t},\; t\in [0,T^*). $
	Let $t\to T^{*-}$, then $\mathcal{D}(\tilde\varphi(T^*))\le \mathcal{D}(\tilde{\varphi}(0))<H$, which is contradictory to $ \mathcal{D}(\tilde\varphi(T^*))=H$. Therefore, we have $T^*=\infty.$
	i.e.,  $\mathcal{D}(\tilde\varphi(t))< H,\;  t\in [0,\infty).$\\
	$(2)$ Combing $(1)$ and Lemma \ref{lem1}, we have $\mathcal{D}(\tilde{\varphi}(t))\le \mathcal{D}(\tilde{\varphi}(0))e^{-\lambda_1 t},\; t\in[0,\infty).$}
\end{proof}
	

\begin{remark}
	\em Theorem \ref{thm1} gives a  nontrivial estimate on the basin of stable memories. It tells that, if the initial data  reflecting a defective  pattern (a grayscale image), is not too far away from some memory, then the memory will be retrieved.    As a first rigorous analysis on the basin, we acknowledge that the estimate in Theorem \ref{thm1} is conservative.  For example, the defective patterns obtained by flipping the sign in certain components of   memories cannot be included in the estimated  basin. Further seeking on the basin of stable patterns will be an interesting future problem.
\end{remark}

	\section{The system with three memories}\label{sec3} \setcounter{equation}{0}
	In this section,  we are going to show  that the   theory in subsection \ref{subsec1}, for the case of two memories, cannot be extended to the case of three memories. 
We will prove that the memories are not $\varepsilon$-independently stable in a weak condition.
	
	In this section, we consider the system \eqref{mod1} with a set of  memories $\{\xi^{1},\xi^2,\xi^3\}\subset \{1,-1\}^N$, written as
\begin{linenomath*}
			\begin{equation}\label{mod3}
		{\dot \varphi}_i =\frac{1}{N}\sum_{j=1}^{N}
		S_{ij}\sin(\varphi_{j} - \varphi_i )+\frac{\varepsilon}{N}\sum_{j=1}^{N}\sin2(\varphi_{j} -\varphi_{i}),  \quad  S_{ij}:= \xi^1_i\xi^1_j+\xi^2_i\xi^2_j+\xi^3_i\xi^3_j.
		\end{equation}
	\end{linenomath*}
Again we suppose  $\xi^k\neq \pm\xi^l (k\neq l)$ and  the Jacobian of \eqref{mod3} at a binary pattern $\eta$ is
\begin{linenomath*}
	\[\mathcal I_{\eta}=\begin{pmatrix}
	-\hat  T_{1} & \frac{1}{N}S_{12}\eta_{1}\eta_{2}+\frac{2\varepsilon}{N} & \frac{1}{N}S_{13}\eta_{1}\eta_{3}+\frac{2\varepsilon}{N} & \dots &\frac{1}{N}S_{1N}\eta_{1}\eta_{N}+\frac{2\varepsilon}{N}  \\
	\frac{1}{N}S_{21}\eta_{2}\eta_{1}+\frac{2\varepsilon}{N} & -\hat T_{2} & \frac{1}{N}S_{23}\eta_{2}\eta_{3}+\frac{2\varepsilon}{N} & \dots & \frac{1}{N}S_{2N}\eta_{2}\eta_{N}+\frac{2\varepsilon}{N} \\
	\dots & \dots & \dots & \ddots & \dots \\
	\frac{1}{N}S_{N1}\eta_{N}\eta_{1}+\frac{2\varepsilon}{N} & \frac{1}{N}S_{N2}\eta_{N}\eta_{2}+\frac{2\varepsilon}{N} & \frac{1}{N}S_{N3}\eta_{N}\eta_{3}+\frac{2\varepsilon}{N} & \dots & -\hat T_{N}
	\end{pmatrix},\]
\end{linenomath*}
	where
	$\hat T_{i}=\frac{1}{N}\sum_{j=1,j\ne i}^{N}S_{ij}\eta_{i}\eta_{j}+\frac{2\varepsilon (N-1)}{N}$.

   We focus on the stability of the three memories and consider the Jacobians $\mathcal I_{\xi^k}, \, k=1,2,3$.  We   consider  the Jacobian $\mathcal I_{\xi^1}$  only and the other two can be obtained by permutation. 
We introduce the following notation:
 \begin{linenomath*}
	\begin{align*}
	&J_{1}=\left\{j\in [N] \mid \xi^2_j=\xi^3_j \right\},  \quad J_{11}=\left\{j\in J_{1} \mid \xi^2_j=\xi^3_j=\xi^1_j \right\}, \;\;
	J_{12}=\left\{j\in J_{1} \mid \xi^2_j=\xi^3_j=-\xi^1_j \right\},\\
&J_{2}=\left\{j\in [N]   \mid \xi^2_j=-\xi^3_j \right\},\,
 J_{21}=\left\{j\in J_{2} \mid \xi^2_j=-\xi^3_j=\xi^1_j \right\},
	J_{22}=\left\{j\in J_{2} \mid \xi^2_j=-\xi^3_j=-\xi^1_j \right\}.
	\end{align*}\end{linenomath*}
Furthermore, we denote
	\begin{align*}
	  	&J_{11}^{+}=\left\{j\in J_{11}\mid \xi^2_j=\xi^3_j=\xi^1_j=1\right\},  \qquad\,  J_{11}^{-}=\left\{j\in J_{11}\mid \xi^2_j=\xi^3_j=\xi^1_j=-1\right\},\\
&J_{12}^{+}=\left\{j\in J_{12}\mid \xi^2_j=\xi^3_j=-\xi^1_j=1\right\}, \quad \,\, J_{12}^{-}=\left\{j\in J_{12}\mid \xi^2_j=\xi^3_j=-\xi^1_j=-1\right\},\\
	  	&J_{21}^{+}=\left\{j\in J_{21}\mid \xi^2_j=-\xi^3_j=\xi^1_j=1\right\},\quad  \,\, J_{21}^{-}=\left\{j\in J_{21}\mid \xi^2_j=-\xi^3_j=\xi^1_j=-1\right\},\\
	  	&J_{22}^{+}=\left\{j\in J_{22}\mid \xi^2_j=-\xi^3_j=-\xi^1_j=1\right\}, \,\,\,\, J_{22}^{-}=\left\{j\in J_{22}\mid \xi^2_j=-\xi^3_j=-\xi^1_j=-1\right\} .
	  	\end{align*}

 \begin{lemma}\label{eig123lem1}
 	Let $\{\xi^1,\xi^2,\xi^3\}$ be the set of memories in system \eqref{mod3}. For $\mathcal I_{\xi^1}$, we have:\\  
 $(1)$  $0$ is a single eigenvalue with eigenspace $V[\mathbb{1}]$;\\
	$(2)$ $-2\varepsilon$ is a single eigenvalue with eigenspace  $V(J_{1};J_{2})$; \\
 	 $(3)$  if $J_{11}\neq \emptyset$, then $\frac{-3|J_{11}|+|J_{12}|-|J_{2}|}{N}-2\varepsilon$ is an eigenvalue with multiplicity $|J_{11}|-1$,  corresponding to the eigenspace $V(J_{11}^{+};J_{11}^{-})+V(J_{11}^{+})+V(J_{11}^{-})$;\\
 	 $(4)$  if $J_{12}\neq \emptyset$, then $\frac{-3|J_{12}|+|J_{11}|-|J_{2}|}{N}-2\varepsilon$ is an eigenvalue with multiplicity $|J_{12}|-1$,  corresponding to the eigenspace $V(J_{12}^{+};J_{12}^{-})+V(J_{12}^{+})+V(J_{12}^{-})$;\\
  $(5)$ if $J_{11}\neq \emptyset$ and $J_{12} \neq \emptyset$, then $\frac{|J_{1}|-|J_{2}|}{N}-2\varepsilon$ is a single eigenvalue with eigenspace $V(J_{11}; J_{12})$;\\
   $(6)$  if $J_{21}\neq \emptyset$, then $\frac{-3|J_{21}|+|J_{22}|-|J_{1}|}{N}-2\varepsilon$ is an eigenvalue with multiplicity $|J_{21}|-1$, corresponding to the eigenspace $V(J_{21}^{+};J_{21}^{-})+V(J_{21}^{+})+V(J_{21}^{-})$;\\
	 $(7)$  if $J_{22}\neq \emptyset$, then $\frac{-3|J_{22}|+|J_{21}|-|J_{1}|}{N}-2\varepsilon$ is an eigenvalue with multiplicity $|J_{22}|-1$,  corresponding to the eigenspace $V(J_{22}^{+};J_{22}^{-})+V(J_{22}^{+})+V(J_{22}^{-})$;\\
    $(8)$ if $J_{21}\neq \emptyset$ and $J_{22} \neq \emptyset$, then $\frac{|J_{2}|-|J_{1}|}{N}-2\varepsilon$ is a single eigenvalue with eigenspace $V(J_{21}; J_{22})$.
 \end{lemma}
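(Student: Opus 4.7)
The plan is to mirror the proof of Lemma \ref{eig_eta} (the two-memory case), adapted to the richer partition $\{J_{11}^\pm, J_{12}^\pm, J_{21}^\pm, J_{22}^\pm\}$. First I would check that $|J_{11}|+|J_{12}|+|J_{21}|+|J_{22}|=N$ and, more importantly, that the claimed dimensions in (1)--(8) add up to exactly $N$: indices (3)--(5) contribute $(|J_{11}|-1)+(|J_{12}|-1)+1 = |J_1|-1$ (with the natural adjustments when $J_{11}$ or $J_{12}$ is empty), (6)--(8) contribute $|J_2|-1$, and (1),(2) contribute the remaining $2$. This reduces the task to exhibiting an eigenvector in each listed subspace, since the spaces are mutually independent.

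Next I would set up the reduced action. For any $x=(x_1,\dots,x_N)^{\mathrm{T}}$ with $\sum_j x_j = 0$ one has, exactly as in \eqref{a} and \eqref{aa},
\begin{equation*}
(\mathcal I_{\xi^1} x)_i = \frac{1}{N}\sum_{j=1}^N S_{ij}\xi^1_i\xi^1_j(x_j - x_i) - 2\varepsilon x_i.
\end{equation*}
The key algebraic identity, playing the role of \eqref{cxx}/\eqref{equ5}, is
\begin{equation*}
S_{ij}\xi^1_i\xi^1_j = 1 + \xi^1_i\xi^2_i\,\xi^1_j\xi^2_j + \xi^1_i\xi^3_i\,\xi^1_j\xi^3_j,
\end{equation*}
where the signs $\xi^1_i\xi^2_i$ and $\xi^1_i\xi^3_i$ are constant on each of $J_{11},J_{12},J_{21},J_{22}$ (equal to $(+,+),(-,-),(+,-),(-,+)$ respectively), and similarly for the $j$-indices. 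Consequently $S_{ij}\xi^1_i\xi^1_j$ takes one of the values $3,-1,1$ as a function of which pair of sets $(i,j)$ lies in; I would tabulate this $4\times 4$ block structure once and reuse it throughout.

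With these preparations, (1) is immediate from global phase-shift invariance. For (2), I would take $x^\star = x^\star(J_1;J_2)$ and show $\sum_j S_{ij}\xi^1_i\xi^1_j(x^\star_j - x^\star_i) = 0$ for every $i$; this splits into the cases $i\in J_1$ (where differences vanish within $J_1$) and $i\in J_2$ (where differences vanish within $J_2$), exactly paralleling the proof of Lemma \ref{eig_eta}(2). For (3), I would verify in turn that the three spanning families of $V(J_{11}^+;J_{11}^-) + V(J_{11}^+) + V(J_{11}^-)$ are eigenvectors for $\frac{-3|J_{11}|+|J_{12}|-|J_2|}{N}-2\varepsilon$: the coefficient $-3|J_{11}|$ comes from the diagonal block value $3$ on $J_{11}\times J_{11}$, the $+|J_{12}|$ from the cross-block value $-1$ on $J_{11}\times J_{12}$ (carrying a sign from the difference $x^\star_j - x^\star_i$), and the $-|J_2|$ from the value $1$ on $J_{11}\times J_2$. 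Part (4) follows from (3) by the symmetry $\eta \leftrightarrow -\eta$ of $\mathcal I_\eta$, applied after relabeling; and (5) uses the mixed vector $x^\star(J_{11};J_{12})$, producing cancellations on $J_2$ and the factor $|J_1|-|J_2|$ from the net contribution of the $J_{11}$--$J_{12}$ blocks. Finally, (6)--(8) follow from (3)--(5) by replacing $\xi^3$ with $-\xi^3$: this leaves the Hebbian coupling $S_{ij}$ unchanged but permutes the partition as $(J_{11},J_{12},J_{21},J_{22}) \mapsto (J_{21},J_{22},J_{11},J_{12})$, interchanging the roles of $J_1$ and $J_2$ in the statements.

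The main obstacle, as I anticipate it, is purely bookkeeping rather than conceptual: in the two-memory proof the constant $S_{ij}\xi^1_i\xi^1_j$ was a $2\times 2$ pattern in $(1\pm\xi^1_i\xi^2_i)$, whereas here it is a $4\times 4$ block pattern whose off-diagonal entries $\pm 1$ must be signed correctly against $x^\star_j - x^\star_i$ in each case. Organizing the computation as a single display of the block values $S_{ij}\xi^1_i\xi^1_j$ indexed by $J_{\alpha\beta}\times J_{\gamma\delta}$, and then handling (3), (5), and their analogues in parallel, should keep the case analysis manageable and make the appearance of the specific numerators $-3|J_{11}|+|J_{12}|-|J_2|$, $|J_1|-|J_2|$, etc., transparent.
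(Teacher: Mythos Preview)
Your proposal is correct and follows essentially the same route as the paper: the same dimension count, the same reduced formula $(\mathcal I_{\xi^1}x)_i = \frac{1}{N}\sum_j S_{ij}\xi^1_i\xi^1_j(x_j-x_i) - 2\varepsilon x_i$, the same block-constant analysis of $S_{ij}\xi^1_i\xi^1_j$ (the paper writes it as the case formula \eqref{equa5} rather than your $4\times 4$ table, but the content is identical), and the same symmetry reductions for (4) and for (6)--(8). One small correction: for (4) the relevant symmetry is $\xi^1 \leftrightarrow -\xi^1$ (which leaves $S_{ij}$ and hence $\mathcal I_{\xi^1}$ unchanged while swapping $J_{11}\leftrightarrow J_{12}$ and $J_{21}\leftrightarrow J_{22}$), not the $\eta\leftrightarrow -\eta$ move from Lemma~\ref{eig_eta}, since there is no free pattern $\eta$ here.
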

	\begin{proof} Similar to the proof of Lemma \ref{eig_eta}, we only need to show that  any nonzero vector in each space   is an eigenvector of the corresponding  eigenvalue.

$(1)$ The relation $\mathcal I_{\xi^1}\mathbb{1}=0\mathbb{1}$ is obvious, due to  the global phase shift invariance.

 {In order to prove (2)-(8), we first note that} {for any    $x=(x_1,x_2,\dots,x_N)^{\mathrm{T}}$ in those subspaces mentioned in (2)-(8),  we have  $\sum_{\substack{j=1}}^{N}x_j=0$ and
		\begin{align}	\begin{aligned}\label{aaa}
		(\mathcal I_{\xi^1}x^{\star})_i&=\sum_{\substack{j=1,j\neq i}}^{N}\Big(\frac{1}{N}S_{ij}\xi^1_i\xi^1_j+\frac{2\varepsilon}{N}\Big)x_j
+\Big[-\frac{1}{N}\sum_{\substack{j=1,j\neq i}}^{N} S_{ij}\xi^1_i\xi^1_j-\frac{2\varepsilon(N-1)}{N}\Big]x_i\\
		&=\frac{1}{N}\sum_{j=1}^{N}S_{ij}\xi^1_i\xi^1_j(x_j-x_i)-2\varepsilon x_i.
		\end{aligned}\end{align}
Also note that
	\begin{align}\label{equa5} S_{ij}\xi^1_i\xi^1_j=(\xi^1_i\xi^1_j+\xi^2_i\xi^2_j+\xi^3_i\xi^3_j)\xi^1_i\xi^1_j=\begin{cases} 1+\xi^1_i (\xi^2_i+\xi^3_i), &j\in J_{11},\\ 1-\xi^1_i(\xi^2_i+\xi^3_i), &j\in J_{12},\\ 1+\xi^1_i\xi^1_j\xi^2_j(\xi^2_i-\xi^3_i), &j\in J_{2}. \end{cases}
	\end{align}
	Next we prove the statements (2)-(8).}

	$(2)$
	It suffices to show $(\mathcal I_{\xi^1}x^{\star})_i=-2\varepsilon x^{\star}_i$ for $i\in [N]$ where $x^\star=x^\star(J_{1};J_{2})$. 
By \eqref{aaa}, the desired result is equivalent to
	\begin{linenomath*}
		\begin{align}\label{equa1}
		{\sum_{j=1}^{N}S_{ij}\xi^1_i\xi^1_j(x^{\star}_j-x^{\star}_i)\equiv 0, \;\;\text{for} \;\;i\in [N].}
		\end{align}
	\end{linenomath*}
	Using \eqref{equa5}, 
we have
		\begin{align*}
		 \sum_{j=1}^{N}S_{ij}\xi^1_i\xi^1_j(x^{\star}_j-x^{\star}_i)
 &=\Big(\sum_{j\in J_{11}} +\sum_{j\in J_{12}} +\sum_{j\in J_{2}}\Big)S_{ij}\xi^1_i\xi^1_j(x^{\star}_j-x^{\star}_i)\\		
&=\Big[1+\xi^1_i(\xi^2_i+\xi^3_i)\Big]\sum_{j\in J_{11}}(x^{\star}_j-x^{\star}_i)+\Big[1-\xi^1_i(\xi^2_i+\xi^3_i)\Big]\sum_{j\in J_{12}}(x^{\star}_j-x^{\star}_i)\\
		&\quad +\sum_{j\in J_{2}}\Big[1+\xi^1_i\xi^1_j\xi^2_j(\xi^2_i-\xi^3_i)\Big](x^{\star}_j-x^{\star}_i).
		\end{align*}
	If $i\in J_{1}$, then $\sum_{j\in J_{11}}(x^{\star}_j-x^{\star}_i)=0, \sum_{j\in J_{12}}(x^{\star}_j-x^{\star}_i)=0$ and $\xi^2_i-\xi^3_i=0$. 
If $i\in J_{2}$, then $\xi^2_i+\xi^3_i=0$ and $x^{\star}_j-x^{\star}_i=0$ for $j\in J_{2}$. So the equality \eqref{equa1} holds.

	$(3)$
	 It suffices to prove that the nonzero vectors in  $V(J_{11}^{+};J_{11}^{-}),V(J_{11}^{+})$ and $V(J_{11}^{-})$ are eigenvectors corresponding to the eigenvalue $\frac{-3|J_{11}|+|J_{12}|-|J_{2}|}{N}-2\varepsilon$.
	
	For $V(J_{11}^{+};J_{11}^{-})$,
	we need to prove $\mathcal I_{\xi^1}x^{\star}=\big(\frac{-3|J_{11}|+|J_{12}|-|J_{2}|}{N}-2\varepsilon\big) x^{\star} $ where $x^\star=x^\star(J_{11}^{+};J_{11}^{-})$. 
This is equivalent to show
	\begin{linenomath*}
		\begin{align}
		&(\mathcal I_{\xi^1}x^{\star})_i=\big(\frac{-3|J_{11}|+|J_{12}|-|J_{2}|}{N}-2\varepsilon\big) |J_{11}^{-}|,\;\;\text{for} \;\;i\in J_{11}^{+},\label{equa6}\\
		&(\mathcal I_{\xi^1}x^{\star})_i=-\big(\frac{-3|J_{11}|+|J_{12}|-|J_{2}|}{N}-2\varepsilon\big) |J_{11}^{+}|,\;\;\text{for} \;\;i\in J_{11}^{-},\label{equa7}\\
		&(\mathcal I_{\xi^1}x^{\star})_i=0,\;\;\text{for} \;\;i\notin J_{11}^{+}\cup J_{11}^{-}.\label{equa8}
		\end{align}
	\end{linenomath*}
	By \eqref{aaa} and \eqref{equa5}, we obtain
	\begin{linenomath*}
		\begin{align*}
		&\quad	(\mathcal I_{\xi^1}x^{\star})_i=\frac{1}{N}\Big(\sum_{j\in J_{11}^{+}}+\sum_{j\in J_{11}^{-}}+\sum_{j\in J_{12}}+\sum_{j\in J_{2}}\Big)S_{ij}\xi^1_i\xi^1_j (x^{\star}_j-x^{\star}_i) -2\varepsilon x^{\star}_i\\
& {=\frac{1}{N}\left[1+\xi^1_i(\xi^2_i+\xi^3_i)\right]|J_{11}^{+}|(|J^{-}_{11}|
-x^{\star}_i)+\frac{1}{N}[1+\xi^1_i(\xi^2_i+\xi^3_i)]|J_{11}^{-}|(-|J^{+}_{11}|-x^{\star}_i)}\\
		&\quad {-\frac{1}{N}\left[1-\xi^1_i(\xi^2_i+\xi^3_i)\right]|J_{12}|x^{\star}_i
		-\frac{1}{N}\sum_{j\in J_{2}}\left[1+\xi^1_i\xi^1_j\xi^2_j(\xi^2_i-\xi^3_i)\right]x^{\star}_i -2\varepsilon x^{\star}_i.}
		\end{align*}
	\end{linenomath*}
	If $i\in J_{11}^{+}$, then $\xi^1_i(\xi^2_i+\xi^3_i)=2$,  $\xi^2_i-\xi^3_i=0$ and $x^{\star}_i=|J_{11}^{-}|$.  Combining  with $|J_{11}^{+}|+|J_{11}^{-}|=|J_{11}|$,   the equality \eqref{equa6} holds. If $i\in J_{11}^{-}$, then $\xi^1_i(\xi^2_i+\xi^3_i)=2$,  $\xi^2_i-\xi^3_i=0$ and $x^{\star}_i=-|J_{11}^{+}|$.  Combining  with $|J_{11}^{+}|+|J_{11}^{-}|=|J_{11}|$, the equality \eqref{equa7} holds. If $i\notin J_{11}^{+}\cup J_{11}^{-}$, then $x^{\star}_i=0$ and the equality \eqref{equa8} holds.
	
	Secondly, we consider the vectors in $V(J_{11}^{+})$. For any $x=(x_1,x_2,\dots,x_N)^{\mathrm{T}}\in V(J_{11}^{+})$,
	we need to prove
	\begin{linenomath*}
		\begin{align}
		&(\mathcal I_{\xi^1}x)_i=0,\;\;\text{for} \;\;i\notin J_{11}^{+},\label{equa9}\\
		&(\mathcal I_{\xi^1}x)_i=\big(\frac{-3|J_{11}|+|J_{12}|-|J_{2}|}{N}-2\varepsilon\big) x_i,\;\;\text{for} \;\;i\in J_{11}^{+}.\label{equa10}
		\end{align}
	\end{linenomath*}
By \eqref{aaa} and  \eqref{equa5}, we obtain
	\begin{linenomath*}
		\begin{align*}
		(\mathcal I_{\xi^1}x)_i&=\frac{1}{N}\Big(\sum_{j\in J_{11}}+\sum_{j\in J_{12}} +\sum_{j\in J_{2}} \Big)S_{ij}\xi^1_i\xi^1_j (x_j-x_i) -2\varepsilon x_i\\
		&=\frac{1}{N}[1+\xi^1_i(\xi^2_i+\xi^3_i)]\sum_{j\in J_{11}}(x_j-x_i)+\frac{1}{N}[1-\xi^1_i(\xi^2_i+\xi^3_i)]\sum_{j\in J_{12}}(x_j-x_i)\\
		&\quad +\frac{1}{N}\sum_{j\in J_{2}}[1+\xi^1_i\xi^1_j\xi^2_j(\xi^2_i-\xi^3_i)](x_j-x_i)
		 -2\varepsilon x_i.
		\end{align*}
	\end{linenomath*}
	By the definition of $V(J_{11}^{+})$, we see $\sum_{j\in J_{11}^{+}}x_j=0$ and $x_j=0$ for $j\notin J_{11}^{+}$. Then we have
	\begin{align*}
		(\mathcal I_{\xi^1}x)_i&=\frac{1}{N}\big[1+\xi^1_i(\xi^2_i+\xi^3_i)\big](-|J_{11}|x_i)-\frac{1}{N}\big[1-\xi^1_i(\xi^2_i+\xi^3_i)\big]|J_{12}|x_i\\
		&\quad-\frac{1}{N}\sum_{j\in J_{2}}\big[1+\xi^1_i\xi^1_j\xi^2_j(\xi^2_i-\xi^3_i)\big]x_i -2\varepsilon x_i.
	\end{align*}
	If $i\notin J_{11}^{+}$, then $x_i=0$ and the equality \eqref{equa9} holds. If $i\in J_{11}^{+}$, then $\xi^1_i(\xi^2_i+\xi^3_i)=2$ and $\xi^2_i-\xi^3_i=0$; so the equality \eqref{equa10} holds.
	
	The proof for $V(J_{11}^{-})$ is similar, 
so it is omitted here.
	
	$(4)$ The desired result can be derived from that in (3), by replacing  the set of memories  $\{\xi^1,\xi^2,\xi^3\}$ with $\{-\xi^1,\xi^2,\xi^3\}$, which produces the same Hebbian network $\{S_{ij}\}$.  
	
	$(5)$ It suffices to show $\mathcal I_{\xi^1}x^{\star}=\big(\frac{|J_{1}|-|J_{2}|}{N}-2\varepsilon\big) x^{\star}$  where $x^\star=x^\star(J_{11}; J_{12})$. 
	This is equivalent to show
	\begin{linenomath*}
		\begin{align}
		&(\mathcal I_{\xi^1}x^{\star})_i=\big(\frac{|J_{1}|-|J_{2}|}{N}-2\varepsilon\big) |J_{12}|,\quad\text{for} \;\;i\in J_{11},\label{equa2}\\
		&(\mathcal I_{\xi^1}x^{\star})_i=-\big(\frac{|J_{1}|-|J_{2}|}{N}-2\varepsilon\big) |J_{11}|,\quad\text{for} \;\;i\in J_{12},\label{equa3}\\
		&(\mathcal I_{\xi^1}x^{\star})_i=0,\quad\text{for} \;\;i\notin  { J_1=J_{11}\cup J_{12}}.\label{equa4}
		\end{align}
	\end{linenomath*}
	By \eqref{aaa} and   \eqref{equa5}, 
	we obtain
	\begin{linenomath*}
		\begin{align*}
		(\mathcal I_{\xi^1}x^{\star})_i&=\frac{1}{N}\Big(\sum_{j\in J_{11}}+\sum_{j\in J_{12}} +\sum_{j\in J_{2}} \Big)S_{ij}\xi^1_i\xi^1_j (x^{\star}_j-x^{\star}_i)
		 -2\varepsilon x^{\star}_i\\
		&=\frac{1}{N}\left[1+\xi^1_i(\xi^2_i+\xi^3_i)\right]|J_{11}|(|J_{12}|-x^{\star}_i)+
\frac{1}{N}\left[1-\xi^1_i(\xi^2_i+\xi^3_i)\right]|J_{12}|(-|J_{11}|-x^{\star}_i)\\
		&\quad  {-\frac{1}{N}\sum_{j\in J_{2}}\left[1+\xi^1_i\xi^1_j\xi^2_j(\xi^2_i-\xi^3_i)\right]x^{\star}_i
		-2\varepsilon x^{\star}_i.}
		\end{align*}
	\end{linenomath*}
	If $i\in J_{11}$, then $\xi^1_i(\xi^2_i+\xi^3_i)=2$, $\xi^2_i-\xi^3_i=0$ and $x^{\star}_i=|J_{12}|$; so the equality \eqref{equa2} holds. If $i\in J_{12}$, then $\xi^1_i(\xi^2_i+\xi^3_i)=-2$, $\xi^2_i-\xi^3_i=0$ and $x^{\star}_i=-|J_{11}|$; so the equality \eqref{equa3} holds. If $i\notin J_{1}$, then $x^{\star}_i=0$ and $\xi^2_i+\xi^3_i=0$; so the equality \eqref{equa4} holds.

(6)-(8) The desired results can be derived from that in (3)-(5), by replacing  the set of memories  $\{\xi^1,\xi^2,\xi^3\}$ with $\{\xi^1,\xi^2,-\xi^3\}$, which produces the same  $\{S_{ij}\}$.	
\end{proof}

	Now we can present the stability of memories.
	
	\begin{theorem}\label{phi1}
		Let $\{\xi^1,\xi^2, \xi^3\}$ be a set of memories in system \eqref{mod3} satisfying
	\begin{linenomath*}	\[J_{11} \neq \emptyset,\; J_{12} \neq \emptyset,\; J_{21} \neq \emptyset,\; J_{22} \neq \emptyset.\]\end{linenomath*}
	 {Then for $l\in\{1,2,3\}$,
		$\varphi^*(\xi^l)$ is asymptotically stable if $\varepsilon\in (\varepsilon^*_{\xi^l},\infty)$ and unstable if $\varepsilon\in(0,\varepsilon^*_{\xi^l})$, where}
		\begin{align*}\label{epstar123}
		\varepsilon^*_{\xi^1}=\left|\frac{|J_{11}\cup J_{12}|}{N}-\frac{1}{2}\right|, \quad \varepsilon^*_{\xi^2}= \left|\frac{|J_{11}\cup J_{22}|}{N}-\frac{1}{2}\right|\quad \text{and}\quad \varepsilon^*_{\xi^3}= \left|\frac{|J_{11}\cup J_{21}|}{N}-\frac{1}{2}\right|.
		\end{align*}
	\end{theorem}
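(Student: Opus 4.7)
The plan is to read off the entire spectrum of $\mathcal I_{\xi^1}$ from Lemma \ref{eig123lem1}, identify its dominant non-zero eigenvalue, and then transfer the result to $\varphi^*(\xi^2)$ and $\varphi^*(\xi^3)$ using the fact that the Hebbian coupling $S_{ij}=\xi^1_i\xi^1_j+\xi^2_i\xi^2_j+\xi^3_i\xi^3_j$ is invariant under any permutation of $\{\xi^1,\xi^2,\xi^3\}$ and under the sign flip $\xi^k\mapsto-\xi^k$.

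First I would observe that, under the hypothesis that each of $J_{11},J_{12},J_{21},J_{22}$ is nonempty, all eight clauses of Lemma \ref{eig123lem1} are active and the nontrivial eigenvalues of $\mathcal I_{\xi^1}$ are exactly those enumerated there, the trivial zero coming from the phase-shift direction $V[\mathbb 1]$. Substituting $|J_1|=|J_{11}|+|J_{12}|$ and $|J_2|=|J_{21}|+|J_{22}|$ recasts the four ``multi-dimensional'' eigenvalues from clauses (3), (4), (6), (7) in the form
\[
\frac{-4|J_{11}|+|J_1|-|J_2|}{N}-2\varepsilon,\qquad \frac{-4|J_{12}|+|J_1|-|J_2|}{N}-2\varepsilon,
\]
\[
\frac{-4|J_{21}|+|J_2|-|J_1|}{N}-2\varepsilon,\qquad \frac{-4|J_{22}|+|J_2|-|J_1|}{N}-2\varepsilon,
\]
each of which is strictly dominated by its ``single'' counterpart from clause (5) or (8), precisely because the corresponding $J_{ab}$ is nonempty. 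Combining this with the obvious bound $-2\varepsilon\le \frac{\bigl|\,|J_1|-|J_2|\,\bigr|}{N}-2\varepsilon$, I would conclude
\[
\lambda_{\max}(\mathcal I_{\xi^1})=\max\left\{0,\ \frac{\bigl|\,|J_1|-|J_2|\,\bigr|}{N}-2\varepsilon\right\}.
\]
Since $|J_1|+|J_2|=N$, the second argument vanishes precisely at $\varepsilon=\bigl|\,|J_1|/N-1/2\,\bigr|=\varepsilon^*_{\xi^1}$, so every non-phase-shift eigenvalue is strictly negative for $\varepsilon>\varepsilon^*_{\xi^1}$ while a positive eigenvalue appears for $\varepsilon<\varepsilon^*_{\xi^1}$. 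Because \eqref{mod3} is a gradient flow with analytic potential $f_\varepsilon$, the local-minimum criterion recalled after \eqref{gradient} (cf.\ \cite{A-K}) then delivers asymptotic stability of $\varphi^*(\xi^1)$ in the first regime and instability in the second.

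The statements for $\xi^2$ and $\xi^3$ I would obtain by reapplying the same analysis after a relabeling that places $\xi^2$ (resp.\ $\xi^3$) in the distinguished role. For $\xi^2$ the partition $J_1,J_2$ becomes $\{j:\xi^1_j=\xi^3_j\}$ and its complement; tracing the defining conditions of $J_{11},J_{12},J_{21},J_{22}$ yields $\{j:\xi^1_j=\xi^3_j\}=J_{11}\cup J_{22}$, so the same computation produces $\varepsilon^*_{\xi^2}=\bigl|\,|J_{11}\cup J_{22}|/N-1/2\,\bigr|$, and the analogous bookkeeping for $\xi^3$ yields $\{j:\xi^1_j=\xi^2_j\}=J_{11}\cup J_{21}$ and therefore $\varepsilon^*_{\xi^3}=\bigl|\,|J_{11}\cup J_{21}|/N-1/2\,\bigr|$. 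The main obstacle is precisely this last bookkeeping step: one must verify that the permuted versions of the four index sets remain nonempty, which does follow from the symmetry of the hypothesis in the three memories, and that the set-theoretic identifications above hold exactly so that Lemma \ref{eig123lem1} can be invoked in the permuted frame without additional work.
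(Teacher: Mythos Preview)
Your proposal is correct and follows essentially the same approach as the paper: both proofs invoke Lemma \ref{eig123lem1}, observe that the eigenvalues in clauses (3)--(4) and (6)--(7) are strictly dominated by those in (5) and (8) respectively (using $|J_{ab}|\ge 1$), identify $\lambda_{\max}(\mathcal I_{\xi^1})=\max\{0,\,|\,|J_1|-|J_2|\,|/N-2\varepsilon\}$, and then transfer to $\xi^2,\xi^3$ by permutation symmetry of the Hebbian coupling. Your explicit verification that the permuted index sets coincide with $J_{11}\cup J_{22}$ and $J_{11}\cup J_{21}$ (and hence remain nonempty) is a welcome bit of care that the paper leaves implicit.
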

	\begin{proof} We only show the assertion for $\varepsilon^*_{\xi^1}$, and the others for $\varepsilon^*_{\xi^2}$ or   $\varepsilon^*_{\xi^3}$ can be derived by using the commutation $\xi^2\leftrightarrow \xi^1$ or $\xi^3\leftrightarrow \xi^1$.
		
	 {	Firstly, we note that the eigenvalues in (3) and (4) of Lemma \ref{eig123lem1} are smaller than that in (5), i.e.,
		\[\max\Big\{\frac{-3|J_{11}|+|J_{12}|-|J_2|}{N}-2\varepsilon, \frac{-3|J_{12}|+|J_{11}|-|J_2|}{N}-2\varepsilon \Big\}<\frac{|J_1|-|J_2|}{N}-2\varepsilon.\]
		In fact, we have $|J_{11}|+|J_{12}|=|J_1|$ and
		$\max\Big\{-3|J_{11}|+|J_{12}|,-3|J_{12}|+|J_{11}|\Big\}<|J_1|.$
		Similarly, the eigenvalues in (6) and (7) of Lemma \ref{eig123lem1} are smaller than that in (8). Thus, the maximum eigenvalue of $\mathcal I_{\xi^1}$ is
		\begin{align}\label{maxeig}
		\lambda_{\max}(\mathcal I_{\xi^1})=\max\Big\{\left|\frac{|J_1|-|J_2|}{N} \right|-2\varepsilon,0\Big\}.
		\end{align}}
	
	 {Secondly, we claim that, 	at least one of
		$$\frac{|J_{1}|-|J_{2}|}{N}-2\varepsilon=2\Big[\Big(\frac{|J_{1}|}{N} -\frac{1}{2}\Big)-\varepsilon\Big]  \quad \text{and} \quad
		\frac{|J_{2}|-|J_{1}|}{N}-2\varepsilon=2\Big[\Big(\frac{1}{2}-\frac{|J_{1}|}{N} \Big)-\varepsilon\Big]$$
		appears as an eigenvalue of $\mathcal I_{\xi^1}$. 
		If $\frac{|J_{1}|-|J_{2}|}{N}-2\varepsilon$ is not an eigenvalue of $\mathcal I_{\xi^1}$, then either $J_{11}=\emptyset$ or $J_{12}=\emptyset$.   This means $J_{21}\neq\emptyset$ and $J_{22}\neq\emptyset$, which implies that $\frac{|J_{2}|-|J_{1}|}{N}-2\varepsilon$ is   an eigenvalue of $\mathcal I_{\xi^1}$. In fact, among the index sets $J_{11}, J_{21}, J_{22}$ and $J_{12}$, at least three of them are nonempty. For example, $J_{11}=J_{21}=\emptyset$ implies $J_{12}\cup J_{22}=[N]$ and $\xi^2=-\xi^1$, which contradicts to the fact that $\xi^k\neq \pm\xi^l (k\neq l)$.  }
		
		 {Finally, Using
		\[\varepsilon^*_{\xi^1}=\left|\frac{|J_{11}\cup J_{12}|}{N}-\frac{1}{2}\right|=\left|\frac{|J_{1}|}{N}-\frac{1}{2}\right|=\frac{1}{2}\left|\frac{|J_{1}|-| J_{2}|}{N}\right|,\]
		we can see that when $\varepsilon>\varepsilon^*_{\xi^1}$, the above estimates imply that  all   eigenvalues of $\mathcal I_{\xi^1}$ stated in Lemma \ref{eig123lem1} are strictly less than $0$, 
	except for the eigenvalue $0$ which   is due to the global phase shift invariance. So $\varphi^*(\xi^1)$ is asymptotically stable.  When $\varepsilon<\varepsilon^*_{\xi^1}$, by \eqref{maxeig} we have   $\lambda_{\max}(\mathcal I_{\xi^1})=\left|\frac{|J_1|-|J_2|}{N} \right|-2\varepsilon>0$ and so $\varphi^*(\xi^1)$ is unstable.}
	\end{proof}
	
	\begin{remark}
		{\em In the formulas for critical strengths $\varepsilon^*_{\xi^l}$ in Theorem \ref{phi1}, we have
		\begin{linenomath*}
		\begin{align*}
		J_{11}\cup J_{21}=\left\{j \,|\, \xi^1_j=\xi^2_j \right\},\quad
		  J_{11}\cup J_{22}=\left\{j \,|\, \xi^1_j=\xi^3_j \right\},\quad
		 J_{11}\cup J_{12}=\left\{j  \,|\, \xi^2_j=\xi^3_j \right\}.
		\end{align*}
		\end{linenomath*}
		Theorem \ref{phi1} shows that memories $\xi^1$, $\xi^2$ and $\xi^3$, in general,  are not  $\varepsilon$-independently stable, which differs from the case with two memories (see Theorem \ref{onlytwo}).}
	\end{remark}
	
	Next we present   examples  for  the system \eqref{mod3} with  three memories, in which  it can be impossible to distinguish  the memories from other   patterns by selecting the parameter $\varepsilon$.
		\begin{example}\label{exam1}
		{\em Consider a set of memories $\{\xi^1,\xi^2, \xi^3\}$ in Figure \ref{n789} with $N=968$. By  Theorem \ref{phi1}, we have
		$\varepsilon^*_{\xi^1}
			\approx 0.248, \;
			\varepsilon^*_{\xi^2}
			\approx 0.109,\;
			\varepsilon^*_{\xi^3}
			\approx 0.078.$
	Therefore,   none of $\xi^1,\xi^2$ and $\xi^3$ is $\varepsilon$-independently stable.
	On other hand, we find that the critical strength $\varepsilon^*_{\xi}$ of
		the  pattern $\xi$  in Figure \ref{num789} is approximately $0.173$, by calculating the eigenvalues of Jacobian.  
 It is easy to see that  the pattern $\xi^1$   cannot be distinguished from $\xi$  by selecting  $\varepsilon$, since  $\varepsilon^*_{\xi}<\varepsilon^*_{\xi^1}$. }

		{\em As an extreme case,  let us consider the memories $\{\eta^1,\eta^2, \eta^3\}$ in Figure \ref{n368}   with $N=968$. None  of the memories is $\varepsilon$-independently stable, since
		$
		\varepsilon^*_{\eta^1} 
		\approx 0.215, \;
		\varepsilon^*_{\eta^2}
		\approx 0.170,\;
		\varepsilon^*_{\eta^3}
		\approx 0.160.
		$
		However, the binary pattern $ {\eta}$  in Figure \ref{num368} is $\varepsilon$-independently stable.} 
	
	\end{example}
		\begin{figure*}[htbp]
		\small
		\centering
		\subfigure[$ \xi^1,\xi^2$ and $\xi^3$.]{
			\label{n789}
			\includegraphics[width=4.1cm,height=1.4cm]{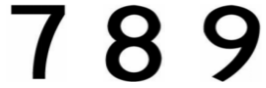}}
		\hspace{0.5cm}
		\subfigure[$\xi$]{
			\label{num789}
			\includegraphics[width=0.8cm,height=1.4cm]{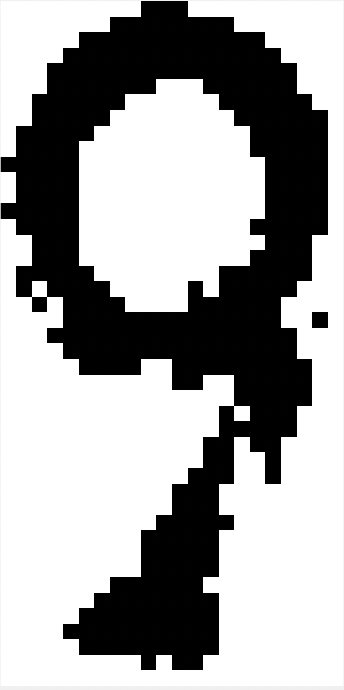}}
		\hspace{1.6cm}
		\subfigure[$\eta^1,\eta^2$ and $\eta^3$.]{
			\label{n368}
			\includegraphics[width=4.4cm,height=1.4cm]{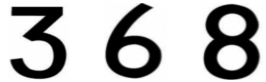}}
		\hspace{0.5cm}
		\subfigure[$ {\eta}$]{
			\label{num368}
			\includegraphics[width=0.8cm,height=1.4cm]{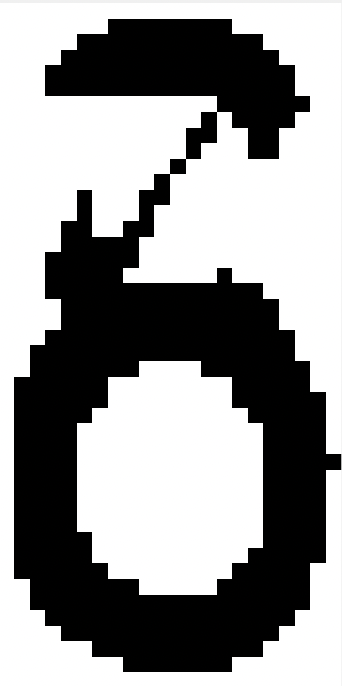}}
		\caption{The sets of memories and binary patterns $\xi, {\eta}$ in Example \ref{exam1}.}
	\end{figure*}

	\section{The approach for general cases and simulations}\label{sec4}
	\setcounter{equation}{0}
	In this section, we propose an approach for the pattern retrieval problems with general standard patterns, inspired by Theorem \ref{onlytwo}. 
	\subsection{Approach for  general standard patterns}
	In this part, we  use an idea of {\em subgrouping}  the standard patterns to reduce the number of memories in each process. Let $\{\eta^1,\eta^2,\dots,\eta^M\}\, (M\ge 2)$ be a set of standard patterns and let $\bar\eta$  be a defective pattern. 
	 The procedure is as follows, referred as	{\em procedure $\mathbf A$}:
	\begin{itemize}
		\item[${\mathbf{(A1)}}$]  Subgroup the standard patterns so that each subgroup contains two patterns. A subgroup containing one pattern is allowed when $M$ is odd.  Then $\lceil \frac{M}{2}\rceil$ subgroups are obtained, where $\lceil \frac{M}{2}\rceil$ denotes the smallest integer not less than $\frac{M}{2}$.
		\item[${\mathbf{(A2)}}$]  For each subgroup, we construct a  pattern retrieval system  memorizing the subgroup and use the dynamical system to retrieve one from the subgroup for the defective pattern $\bar\eta$.  Finally we retrieve $\lceil \frac{M}{2}\rceil$ patterns, denoted by $\eta^{(1,1)}, \eta^{(1,2)}, \dots, \eta^{(1,\lceil \frac{M}{2}\rceil)}$.
		\item[${\mathbf{(A3)}}$]  Use $\big\{\eta^{(1,1)}, \eta^{(1,2)}, \dots, \eta^{(1,\lceil \frac{M}{2}\rceil)}\big\}$ as  the set of standard patterns and perform the same process in Step ${\mathbf{(A1)}}$ and Step ${\mathbf{(A2)}}$.  After finite iterations,   one of the standard patterns is retrieved.
	\end{itemize}
	An algorithm realizing procedure $\mathbf A$ is  given as in Algorithm \ref{algA}.
	
	\begin{algorithm*}[h!]
		\caption{Binary pattern recognition  by subgrouping the standard patterns. }
		\label{algA}
		\begin{algorithmic}
			\State Input standard patterns $\{\eta^1,\eta^2,\dots,\eta^M\}$ and a defective pattern  $\bar\eta$. 
			\State \textbf{Step 1.}  Subgroup the standard patterns so that each one contains one or two  patterns. Output $\lceil \frac{M}{2}\rceil$ subgroups. 
			\State \textbf{Step 2.} For the subgroup containing only one pattern (if exists), output this pattern.   For each subgroup containing two patterns $\{\eta^k,\eta^l\}$, we apply
			procedure ${\mathbf{(A2)}}$.  That is, solve the system of nonlinear equations
			\[{\dot \varphi}_i =\frac{1}{N}\sum_{j=1}^{N}
			(\xi^1_i\xi^1_j+\xi^2_i\xi^2_j)\sin(\varphi_{j} - \varphi_i)+\frac{\varepsilon}{N}\sum_{j=1}^{N}\sin2(\varphi_{j}-\varphi_{i}),\;i=1,2,\dots,N\]
			where $\varepsilon$ is selected according to Theorem \ref{onlytwo} and
		$\xi^1=\eta^k,\; \xi^2=\eta^l,\;  \varphi(0)=\arccos(\bar\eta).$
			If $m(\xi^1)\to 1$, output $\eta^k$; if  $m(\xi^2)\to 1$, output $\eta^l$. Here the function $m$ is defined in \eqref{overlap}.
			\State  \textbf{Step 3.} Collect the outputs  for all subgroups obtained in Step 2, and take the collection as a new set of standard patterns. If the set contains more than one pattern, we redo Step 1-Step 2 with  the new standard pattern set; if the set contains only one pattern, the process stopped and this pattern is the  final output.
		\end{algorithmic}
	\end{algorithm*}
	
	\subsection{Simulation}
	In this subsection, we present the simulation to illustrate the  applications of Algorithm \ref{algA}.
	
	Consider the {mutually nonorthogonal} standard patterns  in Figure \ref{Fignlett} which are  symbols of numbers $\{1,2,\dots,9,0\}$ and letters $\{A,B,\dots,Y,Z\}$.
	The dimension of each pattern is $968$ ($44 \times 22$).
	In the following experiments,
	we consider a binary pattern retrieval problem with a defective copy of  standard patterns.
	
	\begin{figure}[htbp]
		\small
		\centering    
		\subfigure[Number symbols $\{1,2,\dots,9,0\}$.]{
			\label{Fign10}
			\includegraphics[width=9cm, height=1.1cm]{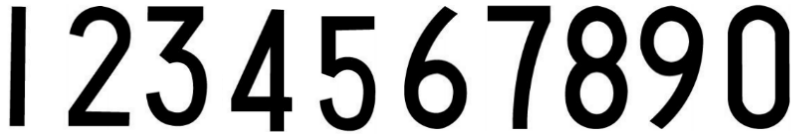}}
		\hspace{0.5cm}
		\subfigure[Letter symbols $\{A,B,\dots,Y,Z\}$.]{
			\label{Figlett26}
			\includegraphics[width=12cm, height=2.1cm]{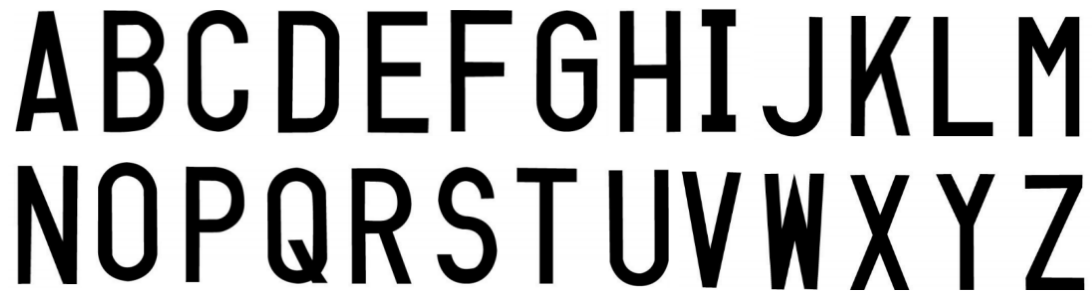}}
		\caption{Standard patterns    with $N=968$.} 
		\label{Fignlett}
	\end{figure}

	As an example, we consider the defective   license plates GB$35024$ with the lower 20 rows sheltered and ZL$76981$ being partially covered by mud,  
shown  in   Figure \ref{lisc}. Simulations show that our approach succeed to retrieve   all the symbols in the license plates.
As an example, the retrieval processes for   the defective symbol  $5$   is described in Figure \ref{hstg5}.
\begin{figure}[htbp] 
		\small
		\centering    
		\subfigure[A defective license plate GB$35024$.]{
			\label{Figcpc}
			\includegraphics[width=7cm, height=1.6cm]{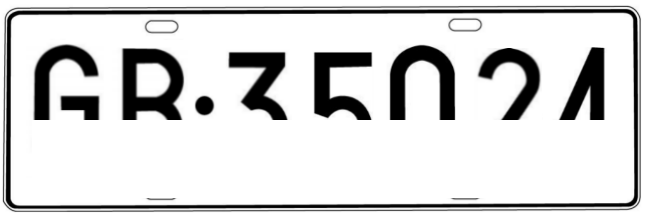}}
		\hspace{0.5cm}
		\subfigure[A defective license plate ZL$76981$.]{
			\label{cpd}
			\includegraphics[width=7cm, height=1.6cm]{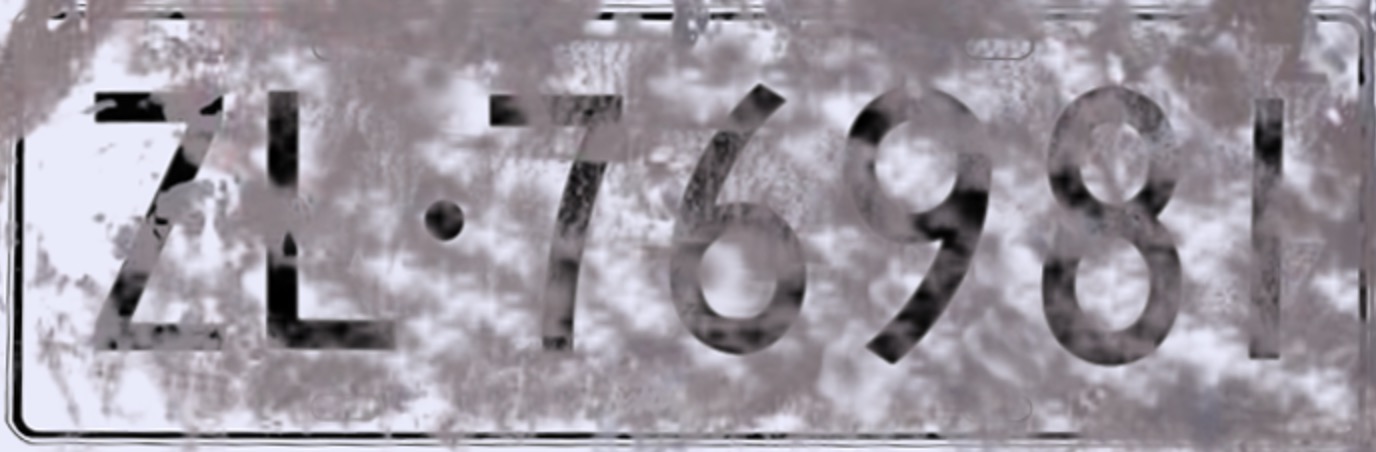}}
		\caption{Two defective license plates.} 
		\label{lisc}
	\end{figure}
	\begin{figure}[htbp]
	\small
	\centering    
		\includegraphics[width=6.5cm, height=6cm]{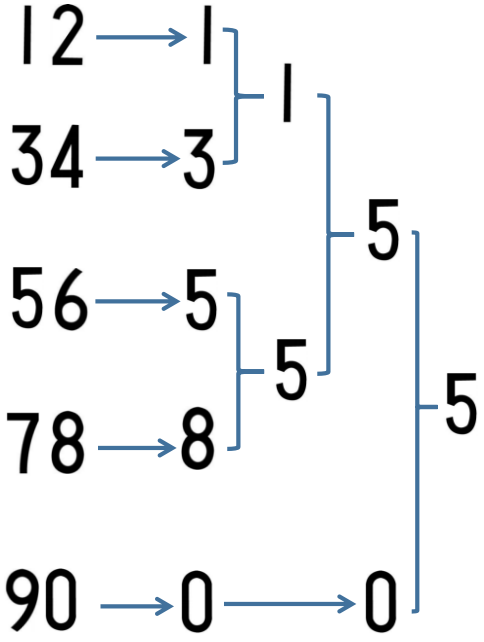}
	\caption{The retrieval processes for the defective symbol $5$ in Figure \ref{Figcpc}.} 	
	\label{hstg5}
\end{figure}

For defective licenses in Figure \ref{Figcpc}-\ref{cpd}, we compared the results of Algorithm \ref{algA} and several public General Optical Character Recognition (General OCR), for example,
Tencent OCR\cite{tencentocr}, Alibaba OCR\cite{alibabaocr} and OCR Space\cite{ocrspace}.  Table \ref{tab1} shows the results of character recognition. 
\begin{table}[htbp]
 \caption{\label{tab1}  Comparison between General OCR  and Algorithm \ref{algA} for Fig. \ref{lisc}.}
   	 \begin{tabular}{|c|c|c|}
    	\hline
    	\textrm{Recognition Platforms}&
    	\textrm{Output for Fig. \ref{Figcpc}}&
    	\textrm{Output for Fig. \ref{cpd}}\\
    	\hline
    	Tencent OCR &  $7099.90$&  $77698$ \\
    	\hline
    	Alibaba OCR &  GRZ50?A&   \text{-Null-} \\
    	\hline
    	OCR Space &   \text{-Null-}&   \text{-Null-}\\
    	\hline
    	Algorithm \ref{algA} & GB$35024$&  ZL$76981$ \\
    	\hline
    	\end{tabular}
    \vspace{0.2cm}
\end{table}

	\section{Conclusion}\label{seccon}
	 In 	this paper, we investigated the Hebbian network of Kuramoto-type oscillators   with general standard patterns. We analyzed the stability/instability of all binary patterns by finding out the spectrum of the Jacobian at each pattern and the  potential analysis for gradient flow, when the system has two memories.  We gave the   critical value for the parameter of the second-order Fourier  term to guarantee the stability of memories and instability of all other patterns.  A  nontrivial estimate for  the basin of   stable patterns was presented.  Based on these results, we designed a  unified approach    for the   application of this model to general pattern retrieval  problems  with small or large errors. Error-free retrieval can be guaranteed which means that only standard patterns can emerge in the dynamical evolution.  The case with  three memories was considered and it was  found that the nice theory  cannot be extended to this case. 
	
	\vspace{0.5cm}
    \textbf{Acknowledgements} Z. Li was supported by the National Natural Science Foundation of China	(Grant  No.   12371509). X. Zhao was supported by the National Natural Science Foundation of China (Grant No.12201156) and the Hong Kong Scholars Scheme (Grant No. XJ2023001). 
	X. Zhou was partially supported by the Research Grants Council General Research Fund of Hong Kong (Grant No. 11308121, 11318522 and 11308323).
	
	\vspace{0.5cm}
	\textbf{Conflicts of Interest}
	The authors declare that there is no conflict of interests regarding thepublication of this paper.
	
	\vspace{0.5cm}
	\textbf{Data availability statement}
	Data sharing not applicable to this article as no datasets were generatedor analyzed during the current study.
	

\end{document}